\newtheorem{theorem}{Theorem}[section]
\newtheorem{lemma}[theorem]{Lemma}
\newtheorem{remark}[theorem]{Remark}
\newtheorem{definition}[theorem]{Definition}
\newcommand{\R}{\mathbb{R}  }
\newcommand{\pfeil}{ \rightarrow }
\newcommand{\into}{\hookrightarrow}
\renewcommand{\phi}{\varphi}
\renewcommand{\div}{\operatorname{div}}
\newcommand{\ljump}{ \llbracket  }
\newcommand{\rjump}{ \rrbracket }
\title[Stability analysis for Mullins-Sekerka with $90^\circ$ angle]{Stability analysis for stationary solutions of the Mullins-Sekerka flow with boundary contact}
\author[Harald Garcke]{Harald Garcke}
\address{Harald Garcke, Fakult\"at f\"ur Mathematik, Universit\"at Regensburg, 93053 Regensburg, Germany}
\author[Maximilian Rauchecker]{Maximilian Rauchecker}
\address{Maximilian Rauchecker, Fakult\"at f\"ur Mathematik, Universit\"at Regensburg, 93053 Regensburg, Germany}
\numberwithin{equation}{section} 
\begin{document}
\maketitle
\begin{abstract}
We first give a complete linearized stability analysis around stationary solutions of the Mullins-Sekerka flow with $90^\circ$ contact angle in two space dimensions. The stationary solutions include flat interfaces, as well as arcs of circles. We investigate the different stability behaviour in dependence of properties of the stationary solution, such as its curvature and length, as well as the curvature of the boundary of the domain at the two contact points. We show that the behaviour changes in terms of these parameters, ranging from exponential stability to instability.

We also give a first result on nonlinear stability for curved boundaries.
\end{abstract}
\section{Introduction}
In this article we study the two-phase Mullins-Sekerka problem inside a bounded, smooth domain in two space dimensions with boundary contact. The sharp interface separating two phases is allowed to meet the boundary of the domain at a constant ninety degree angle. This leads to a contact angle problem for the free boundary.

Let us precisely state the problem.
We consider a fixed, smooth, and bounded domain in two space dimensions $\Omega \subset \R^2$.   
We assume that the domain can be decomposed as $\Omega = \Omega^+ (t) \; \dot\cup \; \mathring \Gamma (t) \; \dot \cup \; \Omega^- (t)$, where $\mathring \Gamma (t)$ denotes the interior of $\Gamma(t)$, a 
smooth one-dimensional curve with two boundary points on $\partial\Omega$. 

The interpretation is that $\Gamma(t)$ is the sharp interface curve separating the two phases, $\Omega^\pm (t)$, which we assume both to be connected. The two boundary points of $\Gamma(t)$ will be denoted by $\partial \Gamma (t)$, the unit normal vector field on $\Gamma(t)$ pointing from $\Omega^-(t)$ to $\Omega^+(t)$ will be denoted by $n_{\Gamma(t)}$. 

The Mullins-Sekerka problem with ninety degree angle condition then reads as
\begin{equation} \label{345huihguzo3gzog345} \tag{1.1}
\begin{alignedat}{2}
V_\Gamma &= - \ljump n_\Gamma \cdot \nabla \mu \rjump, \quad\quad && \text{on } \Gamma(t), \\
\mu|_\Gamma &=  H_\Gamma, &&\text{on } \Gamma(t), \\
\Delta \mu &= 0, &&\text{in } \Omega \backslash \Gamma(t), \\
n_{\partial\Omega} \cdot \nabla \mu|_{\partial \Omega} &= 0, &&\text{on } \partial\Omega \backslash \partial\Gamma(t), \\
n_{\Gamma} \cdot n_{\partial\Omega} &= 0, &&\text{on } \partial \Gamma(t), \\
\Gamma (0) &= \Gamma_0.
\end{alignedat}
\end{equation}
Here, $V_{\Gamma}$ denotes the normal velocity and
$H_\Gamma$ the (mean) curvature of $\Gamma(t)$ with respect to $n_\Gamma$, where we use the sign convention that $H$ is negative for convex spheres. In particular, the sphere of radius $R > 0$ and center $x_0 \in \R^2$ has negative curvature $-1/R$.

The jump brackets $\ljump \cdot \rjump$ are defined by means of 
\begin{equation}
\ljump f \rjump (x) := \lim_{ \varepsilon \pfeil 0+} [ f(x + \varepsilon n_{\Gamma(t)} ) -  f(x - \varepsilon n_{\Gamma(t)} ) ], \quad x \in \Gamma(t).
\end{equation}

Equation $(1.1)_5$ prescribes the ninety degree angle at which the interface $\Gamma(t)$ has contact with the fixed boundary $\partial\Omega$.

We recall, cf. \cite{mulsekpaper12}, that the volume of each of the two phases is conserved, whereas the surface area of the free interface is non-increasing,
\begin{equation} \label{9348765037465087345}
\frac{d}{dt}| \Omega^\pm (t) | = 0, \quad \frac{d}{dt}| \Gamma(t) | \leq 0, \quad t >0.
\end{equation}

Existence of strong solutions to the Mullins-Sekerka problem with $90^\circ$ contact angle (1.1) in an $L_p-L_q$-setting was shown recently in \cite{mulsekpaper12}. The strategy there was to pick some reference surface $\Sigma$ inside the domain, also intersecting the boundary at a ninety degree angle, and write the moving interface as a graph over $\Sigma$ by a height function $h$, depending on space and time. Then pulling back the equations to a time-independent domain, the problem is reduced to a nonlinear evolution equation for $h$.
The height function $h$ in \cite{mulsekpaper12} belongs to
\begin{equation} \label{340958349648569476} \tag{1.2}
h \in W^1_p(0,T;W^{1-1/q}_q(\Sigma)) \cap L_p(0,T;W^{4-1/q}_q(\Sigma)),
\end{equation}
where $p$ and $q$ are different. For further discussion we refer to the article \cite{mulsekpaper12}. We recall that the trace space of $h$ in (1.2) is given by means of real interpolation,
\begin{equation}
X_\gamma := ( W^{4-1/q}_q(\Sigma), W^{1-1/q}_q(\Sigma) )_{1-1/p,p} = B^{4-1/q-3/p}_{qp}(\Sigma),
\end{equation}
and continuously embeds into $C^2(\Sigma)$, cf. Amann \cite{amannlineartheory}. Hence the family of graphs given by $h$ in class (1.2) belongs to $BUC([0,T]; C^2(\Sigma))$.
\subsection*{Preliminaries and function spaces}
Let $d \in \mathbb N$.
The classical $L_p$-Sobolev spaces on $\R^d$ are denoted by $W^k_p(\R^d)$, where $k \in \mathbb N$ and $1 \leq p \leq \infty$. For 
$s \in \R$, the Sobolev-Slobodeckij spaces are denoted by $W^s_p(\R^d)$. By $B^s_{pr}(\R^d)$ we understand the classical Besov spaces on $\R^d$, where $s \in \R, 1 \leq p,r \leq \infty$. We denote by $BUC(\R^d)$ the space of all bounded and uniformly continuous functions.

These function spaces on a bounded, smooth domain $\Omega \subset \R^d$ are as usual defined by restriction. For a given Banach space $X$, the $X$-valued versions of these spaces are denoted by $L_{p}(\Omega;X)$, $W^{k}_{p}(\Omega;X)$, $W^{s}_{p}(\Omega;X)$, $B^{s}_{pr}(\Omega;X)$, and $BUC(\Omega; X)$, respectively. For further discussion we refer to \cite{meyriesveraarpointwise}.

For results on embeddings, traces, interpolation and extension operators we refer to \cite{abelsbuch}, \cite{danchinbuch}, \cite{pruessbuch}, \cite{runst}, \cite{triebel}.

Let us recall the definition of maximal regularity.
\begin{definition}
Let $X$ be a Banach space, $ J = (0,T), 0 < T < \infty$ or $J = \mathbb R_+$. Let $A$ be a closed, densely defined operator on $X$ with domain $D(A) \subset X$. Then the operator $A$ has maximal $L_p$-regularity on $J$, if and only if for every $f \in L_p(J;X)$ there is a unique solution $u \in W^1_p(J;X) \cap L_p(J;D(A))$ to
\begin{equation}
\frac{d}{dt}u (t) + A u (t)= f(t), \quad t \in J, \qquad u|_{t=0} = 0,
\end{equation}
in an almost-everywhere sense in $L_p(J;X)$.
\end{definition}
For a discussion on maximal regularity
we refer to \cite{refpaperpruess}, \cite{prmaxreglpspace}, \cite{dore2000}, \cite{bourgain1984}, \cite{amann}, and \cite{amannlineartheory}.
\subsection*{Outline of paper}
In Section 2 we introduce curvilinear coordinates and transform the problem to a fixed configuration. In Section 3 we derive the full linearization of the Mullins-Sekerka problem at a stationary solution, which may be flat or curved.
Section 4 is devoted to stability and instability results for flat stationary solutions, whereas Section 5 deals with stationary solutions which are arcs of circles. We give an overview of the results on linearized stability in Section 6.
 Section 7 contains a result on nonlinear stability for the case of flat stationary solutions.

\section{Curvilinear coordinates and Hanzawa transform}
In this section we transform the moving free boundary problem (1.1) to a fixed reference configuration, cf. \cite{mulsekpaper12}.

 Let $\Sigma \subset \Omega$ be a smooth reference surface such that $\angle(\Sigma,\partial\Omega) = \pi/2$ on the boundary points $\partial\Sigma \subset \partial\Omega$. Proposition 3.1 in \cite{vogel} states the existence of curvilinear coordinates at least in a small neighbourhood of $\Sigma$. More precisely, there is some possibly small $a = a(\Sigma,\partial\Omega)>0$, such that
\begin{equation}
 X : \Sigma \times (-a,a) \pfeil \R^2, \quad (p,w) \mapsto X(p,w),
\end{equation}
is a smooth diffeomorphism onto its image and $X(.,.)$ is a curvilinear coordinate system, see also \cite{depnerdiss}, \cite{MR2438774}. In particular,
\begin{equation}
 X(p,0) = p, \quad p \in \Sigma,
\end{equation}
hence $D_p  X = I$ on $\Sigma$, as well as
\begin{equation}
D_w  X (p,0) = n_\Sigma (p), \quad p \in \Sigma.
\end{equation}
Furthermore, points on the boundary $\partial\Omega$ only get transported along the boundary, 
\begin{equation}
 X(p,w) \in \partial\Omega, \quad p \in \partial\Sigma, w \in (-a,a). 
\end{equation}
 We need to make use of these coordinates since the boundary $\partial\Omega$ may be curved and therefore a transport only in normal direction is not sufficient. 

We may now parametrize the free interface as follows.
We assume that the free interface is given as a graph over $\Sigma$, that is, there is some height function $h$, such that
\begin{equation}
\Gamma(t) = \Gamma_h (t) := \{ X(p,h(p,t)) : p \in \Sigma \}, 
\end{equation}
for small $t > 0$, at least.
With the help of this coordinate system we may construct a Hanzawa-type transform as follows. The idea goes back to Hanzawa \cite{hanzawa1981}.

Let $\chi \in C_0^\infty(\R)$ be a fixed bump function satisfying $\chi(s)=1$ for $|s| \leq 1/3$, $\chi(s) = 0$ for $|s| \geq 2/3$ and $|\chi'(s)|\leq 4$ for all $s \in \R$. Set $\Sigma_a := X(\Sigma \times (-a,a))$. For a given height function $h$ define
\begin{equation}
F_h (p,w) := \left( p, w - \chi( (w-h(p))/a ) h(p) \right), \quad p \in \Sigma, w \in (-a,a),
\end{equation}
and set
\begin{equation}
\Theta_h (x) := \begin{cases}
(X \circ F_h \circ X^{-1} )(x), & x \in \Sigma_a, \\
x, & x \not\in \Sigma_a. 
 \end{cases}
\end{equation}
The set of admissible height functions is given by 
\begin{equation}
\mathcal U := \{ h \in X_\gamma : |h|_{L_\infty(\Sigma)} < a/5 \},
\end{equation}
where $X_\gamma := B^{4-1/q-3/p}_{qp}(\Sigma)$.
 The following result can be found in \cite{mulsekpaper12}.
\begin{theorem}
Given $h \in \mathcal U$, the transformation $\Theta_h : \Omega \pfeil \Omega$ is a $C^1$-diffeomorphism satisfying $\Theta_h ( \Gamma_h ) = \Sigma$.
\end{theorem}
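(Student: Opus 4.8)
The plan is to reduce everything to the behaviour of the core map $F_h$ on the strip $\Sigma \times (-a,a)$, since $X$ is a fixed smooth diffeomorphism onto $\Sigma_a$ and $\Theta_h$ is the identity off $\Sigma_a$; thus $\Theta_h = X \circ F_h \circ X^{-1}$ inherits its regularity and invertibility directly from $F_h$. Writing $F_h(p,w) = (p, g(p,w))$ with $g(p,w) = w - \chi((w-h(p))/a)\, h(p)$, the decisive structural observation is that $F_h$ acts as the identity in the $p$-variable and only reshuffles the normal coordinate $w$, so its differential is block-triangular with $\det DF_h = \partial_w g$. I would organise the argument around three claims: (i) $F_h$ sends the graph $\{(p,h(p))\}$ to $\Sigma\times\{0\}$, which gives $\Theta_h(\Gamma_h)=\Sigma$; (ii) $F_h$ is a $C^1$-diffeomorphism of $\Sigma\times(-a,a)$ onto itself; and (iii) the two branches of $\Theta_h$ glue into a global $C^1$-diffeomorphism of $\Omega$ respecting $\partial\Omega$.

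Claim (i) is immediate from $\chi(0)=1$: at $w=h(p)$ the argument of $\chi$ vanishes, so $g(p,h(p)) = h(p)-h(p)=0$, and composing with $X$ and using $X(p,0)=p$ yields $\Theta_h(\Gamma_h)=\Sigma$. For (ii) the heart of the matter is the lower bound
\[
\partial_w g(p,w) = 1 - \frac{h(p)}{a}\,\chi'\!\bigl((w-h(p))/a\bigr) \ge 1 - \frac{1}{5}\cdot 4 = \frac15 > 0,
\]
which uses precisely $|h|_{L_\infty(\Sigma)} < a/5$ and $|\chi'|\le 4$. Hence, for every fixed $p$, the fibre map $w\mapsto g(p,w)$ is strictly increasing, giving both fibrewise injectivity (and thus global injectivity, since the $p$-component is untouched) and invertibility of $DF_h$. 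Surjectivity onto the strip follows by noting that $\chi((w-h(p))/a)=0$ as soon as $|w-h(p)|\ge 2a/3$, which holds once $|w|>13a/15$ because $|h|<a/5$; there $g(p,w)=w$, so each fibre map fixes a neighbourhood of $\pm a$ and, being increasing, maps $(-a,a)$ onto $(-a,a)$.

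Finally, for (iii) I would use $X_\gamma \hookrightarrow C^2(\Sigma)$ so that $h\in C^2$ and, together with the smoothness of $\chi$ and $X$, conclude $F_h\in C^1$; the inverse function theorem applied to the bijection $F_h$ with everywhere-invertible derivative then produces a $C^1$ inverse, so $\Theta_h$ is a $C^1$-diffeomorphism on $\Sigma_a$. The same threshold computation shows $F_h=\mathrm{id}$ for $|w|>13a/15$, a full neighbourhood of $\Sigma\times\{\pm a\}$; hence $\Theta_h$ and its differential agree with the identity across the ends of the tube, so the piecewise definition is genuinely $C^1$ on all of $\Omega$. Boundary preservation is inherited from $X(\partial\Sigma\times(-a,a))\subset\partial\Omega$ together with $F_h$ fixing the $p$-variable. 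The main obstacle is not conceptual but quantitative: the constants $a/5$ and $|\chi'|\le4$ must be juggled so as to secure simultaneously the strict positivity of $\partial_w g$ and the vanishing of $\chi$ near the ends of the strip, and this bookkeeping — rather than any deep idea — is where the care is needed.
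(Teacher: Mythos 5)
Your proof is correct: the fibrewise monotonicity bound $\partial_w g \geq 1 - 4\cdot\tfrac{1}{5} = \tfrac{1}{5} > 0$ (using $|\chi'|\leq 4$ and $|h|_{L_\infty}<a/5$), the identity behaviour of $F_h$ for $|w|\geq 13a/15$, and the resulting gluing with the identity across the ends of the tube $\Sigma_a$ are exactly the needed ingredients, and all your constants check out. Note that the paper itself gives no proof of this theorem --- it is quoted from \cite{mulsekpaper12} --- so your argument, which is the standard Hanzawa-transform verification and essentially the one in that reference, supplies precisely what the paper omits.
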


We can then express the equations in the fixed reference configuration by means of the Hanzawa transform $\Theta_h$, cf. \cite{mulsekpaper12}, \cite{navmulsekpap}, \cite{rtipaper}, \cite{wilkehabil}, \cite{pruessbuch}, \cite{eschersimonett}, \cite{Abelsarabgarcke}.
The transformed system reads as
\begin{equation} \label{fullnonlidfgtrhrthenar8765} \tag{2.1}
\begin{alignedat}{2}
\partial_t h &= - a(h)\ljump n_{\Gamma_h} \cdot \nabla_h \eta \rjump , \quad\quad &&\text{on } \Sigma, \\
\eta|_{\Sigma} &= K(h), && \text{on } \Sigma, \\
\Delta_h \eta &= 0, && \text{in } \Omega \backslash \Sigma, \\
n_{\partial\Omega}^h \cdot \nabla_h \eta|_{\partial\Omega} &= 0, && \text{on } \partial\Omega, \\
n_{\partial\Omega}^h \cdot n_{\Gamma_h} &= 0, && \text{on } \partial\Sigma, \\
h|_{t=0} &= h_0, && \text{on } \Sigma.
\end{alignedat}
\end{equation}
Hereby, $K(h)$ is the transformed (mean) curvature operator, cf. \cite{mulsekpaper12},  $n_{\partial\Omega}^h := n_{\partial\Omega} \circ \Theta_h^t$, and the transformed differential operators are given by
\begin{equation}
\nabla_h := (D\Theta_h^t)^\top \nabla, \quad \div_h  := \operatorname{Tr}\nabla_h , \quad \Delta_h := \div_h \nabla_h.
\end{equation}

Furthermore, $h_0$ is a suitable description of the initial configuration at time $t=0$ and $a(h) (t)$ depends only on $h(t)$ and satisfies $a(0) = 1$.

\section{The linearized problem}
Let $\Sigma_*$ be a stationary solution to the Mullins-Sekerka problem with boundary contact (1.1). In particular, the (mean) curvature of $\Sigma_*$ is constant and $\Sigma_*$ is a flat surface or part of a circle intersecting $\partial\Omega$ perpendicularly. We now consider the full linearization of (1.1) at any stationary solution $\Sigma_*$.
 
Referring to \cite{depnerdiss}, \cite{MR3076297}, given an equilibrium solution $\Sigma_*$, the linearization of the transformed mean curvature operator at $h = 0$ is given by
\begin{equation}
K' (0) = \Delta_{\Sigma_*} + | \kappa_*|^2,
\end{equation}
where $\kappa_*$ is the constant curvature of $\Sigma_*$. Furthermore, the linearization of the nonlinear ninety degree angle condition at the boundary at $h=0$ is given by
\begin{equation}
\nabla_{\Sigma_*} h \cdot n_{\partial\Sigma_*} = -S_{\partial\Omega}(n_{\Sigma_*},n_{\Sigma_*})h, \quad\text{on } \partial \Sigma_*.
\end{equation}
Here $S_{\partial\Omega}(n_{\Sigma_*},n_{\Sigma_*})$ is the second fundamental form of $\partial \Omega$ with respect to the outer unit normal $n_{\partial\Omega}$, cf. \cite{depnerdiss}, \cite{MR3076297}. In particular, we have the formula
\begin{equation}
S_{\partial\Omega}(n_{\Sigma_*},n_{\Sigma_*}) h = - (n_{\Sigma_*} \cdot \partial_{ n_{\Sigma_*} } n_{\partial\Omega} ) h =  - (n_{\Sigma_*} \cdot [D   n_{\partial\Omega} n_{\Sigma_*} ] ) h,
\end{equation}
cf. the proof of Lemma 3.7 in \cite{depnerdiss}.
Note that if e.g. $\Omega$ is a convex sphere, $S_{\partial\Omega} < 0$. The linearized problem around a stationary solution $\Sigma_*$ now reads as
 \begin{equation} \label{345huifghfghfgg34g5} \tag{3.1}
\begin{alignedat}{2}
\partial_t h &= - \ljump n_{\Sigma_*} \cdot \nabla \mu \rjump, \quad\quad\quad && \text{on } \Sigma_*, \\
\mu|_{\Sigma_*} &= \Delta_{\Sigma_*} h + \kappa_*^2 h , \quad\quad\quad\quad\quad &&\text{on } \Sigma_*, \\
\Delta \mu &= 0, &&\text{in } \Omega \backslash \Sigma_*, \\
n_{\partial\Omega} \cdot \nabla \mu|_{\partial \Omega} &= 0, &&\text{on } \partial\Omega \backslash \partial\Sigma_*, \\
\nabla_{\Sigma_*} h \cdot n_{\partial\Sigma_*} &= -S_{\partial\Omega}(n_{\Sigma_*},n_{\Sigma_*})h, &&\text{on } \partial \Sigma_*, \\
h (0) &= h_0, &&\text{on }\Sigma_* .
\end{alignedat}
\end{equation}

Regarding the stationary solution we note that $\kappa_*$ is constant and either equal to zero or $-1/R$, because $\Sigma_*$ is flat or part of a circle with radius $ R>0$, respectively.

To identify relevant quantities in the stability analysis, let us formally consider the corresponding eigenvalue problem
\begin{equation} \label{345fghfghhfgh4g5} \tag{3.2}
\begin{alignedat}{2}
\lambda h &= - \ljump n_{\Sigma_*} \cdot \nabla \mu \rjump, \quad\quad & &\text{on } \Sigma_*, \\
\mu|_{\Sigma_*} &= \Delta_{\Sigma_*} h + \kappa_*^2 h , \quad\quad\quad &&\text{on } \Sigma_*, \\
\Delta \mu &= 0, &&\text{in } \Omega \backslash \Sigma_*, \\
n_{\partial\Omega} \cdot \nabla \mu|_{\partial \Omega} &= 0, &&\text{on } \partial\Omega \backslash \partial\Sigma_*, \\
\nabla_{\Sigma_*} h \cdot n_{\partial\Sigma_*} &= -S_{\partial\Omega}(n_{\Sigma_*},n_{\Sigma_*})h, \quad \quad &&\text{on } \partial \Sigma_*,
\end{alignedat}
\end{equation}
for some $\lambda \in \mathbb C$. Multiplying $(3.2)_1$ with $ \Delta_{\Sigma_*} \bar h + \kappa_*^2 \bar h$ in $L_2(\Sigma_*)$ gives
\begin{equation}
\lambda \int_{\Sigma_*} h ( \Delta_{\Sigma_*} \bar h + \kappa_*^2 \bar h ) d\mathcal H^1 = \int_\Omega | \nabla \mu |^2 dx.
\end{equation}
Here, $d \mathcal H^d$ denotes the $d$-dimensional Hausdorff measure, $d \in \mathbb N_0$.
An integration by parts invoking the boundary conditions entails
 \begin{equation} \label{3984750934785034875034} \tag{3.3}
 \begin{alignedat}{1}
\lambda \left[ \int_{\Sigma_*} | \nabla_{\Sigma_*} h |^2 d\mathcal H^1 + \int_{\partial\Sigma_*} S_{\partial\Omega}(n_{\Sigma_*},n_{\Sigma_*}) |h|^2 d\mathcal H^0 - \kappa_*^2 \int_{\Sigma_*} |h|^2 d\mathcal H^1  \right] + \\ + \int_\Omega | \nabla \mu |^2 dx = 0.
\end{alignedat}
\end{equation}
In particular, (3.3) implies that $\lambda$ is necessarily real.

We now note that the term in brackets may changes its sign in dependence of the curvature $\kappa_*$, the values of the form $S_{\partial\Omega}(n_{\Sigma_*},n_{\Sigma_*})$ on the two boundary points of $\partial\Sigma_*$, and the length of the curve $\Sigma_*$. The last dependence is somewhat hidden and stems from the scaling properties of the first term involving the gradient of $h$. 

Referring to \cite{MR2438774}, we want to introduce a bilinear functional by
\begin{equation}
I_* (h,h) := \int_{\Sigma_*} | \nabla_{\Sigma_*} h |^2 d\mathcal H^1 + \int_{\partial\Sigma_*} S_{\partial\Omega}(n_{\Sigma_*},n_{\Sigma_*}) |h|^2 d\mathcal H^0 - \kappa_*^2 \int_{\Sigma_*} |h|^2 d\mathcal H^1.
\end{equation}
Note that for $\lambda \not = 0$, integrating $(3.2)_1$ over $\Sigma_*$ yields that necessarily $\int_{\Sigma_*} h d \mathcal H^1 = 0$, for any eigenfunction $h$ to the eigenvalue $\lambda \not= 0$.

Hence it stems from (3.3) that positivity of $I_*$ on mean value free functions gives $\lambda \leq 0$ for any possible eigenvalue $\lambda$. Hence studying the sign of $I_*$ for mean value free functions is the crucial point in our stability analysis. We want to remark that $I_*$ is the second derivative of the length functional $[ \Gamma \mapsto \int_{\Gamma} d \mathcal H^1 ]$ for variations keeping the areas of the phases conserved, at the point $\Sigma_*$, cf. Proposition 3.3 in \cite{MR1906593} for a related computation. If now $I_*$ is positive on functions having mean zero, the surface $\Sigma_*$ is a minimum point in the energy landscape shaped by the length functional, hence we expect stability of $\Sigma_*$.
Note that since the volumes of the phases are conserved in time, the set of admissible variations of $\Gamma$ naturally corresponds to mean value free functions $h$.

 If on the other hand $I_*$ is not positive anymore, the stationary point $\Sigma_*$ is no longer a minimum, hence we expect instability of $\Sigma_*$. 

As a trivial consequence to (3.3) we want to point out that if $\kappa_* = 0$ and $S$ is identically zero on $\partial \Sigma_*$, we obtain that $\lambda \leq 0$. This corresponds to the geometrical situation of a flat solution $\Sigma_*$ and flat, perpendicular walls, which was already investigated in \cite{mulsekpaper12}.
\section{Flat stationary solutions}
Let us start with the simpler case when the stationary solution is flat, $\kappa_* = 0$. Then by rotation, we can assume that $\Sigma_* = (0,L)$ for some $L > 0$. Let us rewrite (3.1) as an abstract evolution equation in the setting of \cite{mulsekpaper12}. Let $3/2 < q < 2$ and
\begin{equation}
X_0 := W^{1-1/q}_q(\Sigma_*), \quad X_1 := W^{4-1/q}_q(\Sigma_*).
\end{equation}
Define a linear operator $A : D(A) \subset X_1 \pfeil X_0$ as follows. Let $ B u := \ljump n_{\Sigma_*} \cdot \nabla u \rjump$ and $T_0 v$ be the unique solution of the two-phase elliptic problem
\begin{equation} \label{345fdfgdfgh4g5} \tag{4.1}
\begin{alignedat}{2}
\ljump \mu \rjump = 0, \quad \mu|_{\Sigma_*} &= v , \quad\quad\quad &\text{on } \Sigma_*, \\
\Delta \mu &= 0, &\text{in } \Omega \backslash \Sigma_*, \\
n_{\partial\Omega} \cdot \nabla \mu|_{\partial \Omega} &= 0, &\text{on } \partial\Omega \backslash \partial\Sigma_*.
\end{alignedat}
\end{equation}
Then we define $A$ by $A h := B T_0 ( \Delta_{\Sigma_*} h )$, with domain
\begin{equation}
D(A) := X_1 \cap \{ h : \nabla_{\Sigma_*} h \cdot n_{\partial\Sigma_*} = -S_{\partial\Omega}(n_{\Sigma_*},n_{\Sigma_*})h, \;\text{on } \partial \Sigma_* \}.
\end{equation}
We can then rewrite (3.1) as
\begin{equation} \label{kjlh43kjhj345345435} \tag{4.2}
\dot h + Ah = 0, \; t > 0, \quad h(0) = h_0.
\end{equation}
The main benefit of this formulation is now the fact that $A$ has maximal regularity. More precisely, let $p \in (6,\infty)$, $q \in (19/10,2) \cap (2p/(p+1), 2)$, and $J = (0,T)$, $0 < T < \infty$. Then, by a perturbation argument, the operator $A$ has maximal $L_p$-regularity on $J$ with respect to the base space $X_0$, cf. Theorem 4.8 in \cite{mulsekpaper12}. Define the trace space as $X_\gamma := B^{4-1/q-3/p}_{qp}(\Sigma_*)$. Then it holds that
\begin{equation}
X_\gamma = (X_0, X_1)_{1-1/p,p}.
\end{equation}
We now want to apply the generalized principle of linearized stability to deduce stability or instability results for (4.2), cf. \cite{pruessbuch}, \cite{pruessmulsek}.

Let us simplify notation first. Since $\partial \Sigma_* = \{0,L\}$, we may rewrite the boundary conditions as
\begin{equation}
\partial_x h (0) = - \omega_1 h(0), \quad \partial_x h (L) = \omega_2 h(L),
\end{equation}
where
\begin{equation}
\omega_1 := -S_{\partial\Omega}(n_{\Sigma_*},n_{\Sigma_*}) (0), \quad \omega_2 := -S_{\partial\Omega}(n_{\Sigma_*},n_{\Sigma_*}) (L).
\end{equation}
In particular again if $\Omega$ is a convex sphere, we have $\omega_1, \omega_2 > 0$ since $S_{\partial \Omega} < 0$. We now want to analyse different geometries and their respective stability properties.
\begin{figure}[h]
\begin{tikzpicture}
\draw (-1.6,0) arc (270:90:0.5cm);
\draw (-4.1,0) arc (-90:90:0.5cm);
\draw (-3.6,0.5) -- (-2.1,0.5);


\draw (4.5,0) arc (270:90:0.5cm);
\draw (5.5,0) arc (-90:90:0.5cm);
\draw (4,0.5) -- (6,0.5) ;

\draw (0.15,0) -- (0.15,1);
\draw (2.15,0) -- (2.15,1);
\draw (0.15,0.5) -- (2.15,0.5);

\draw (5.1,0.25) node {$\Sigma_\ast$};
\draw (1.3,0.25) node {$\Sigma_\ast$};
\draw (-2.85,0.25) node {$\Sigma_\ast$};
\draw (2.5,0.4) node {$\partial\Omega$};
\draw (-1.7,0.3) node {$\partial\Omega$};
\draw (6.2,0.2) node {$\partial\Omega$};
\end{tikzpicture}

\caption{Different signs of $\omega_1,\omega_2$. Left: $\omega_1 = \omega_2 <0$. Middle: $\omega_1 = \omega_2 = 0$. Right: $\omega_1= \omega_2 > 0$.}
\label{figuregg}
\end{figure}
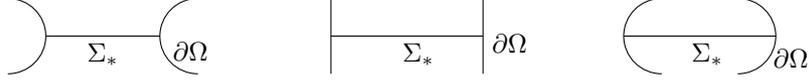
\subsection{Stability and instability results} \label{34095645694586945869456}
Let us start with the left hand side case, where we can show exponential stability of $h_*=0$ for (4.2).
\begin{theorem} \label{jkh34jkh5g34h5345}
Let $\omega_1, \omega_2 \leq 0$. Then $h_* = 0$ is normally stable, that is,
\begin{enumerate}
\item The set of equilibria of \normalfont{(4.2)} is the kernel of $A$, which is one-dimensional.
\item The eigenvalue zero is semi-simple, $X_0 = N(A) \oplus R(A)$.
\item The spectrum satisfies $\sigma (-A) \backslash \{ 0 \} \subset \mathbb R_- := \{ z \in \mathbb C : \operatorname{Re} z < 0, \operatorname{Im} z = 0 \} $.
\end{enumerate}
In particular, $h_* = 0$ is stable in $X_\gamma$ and there is some $\delta > 0$, such that if $|h_0 |_{X_\gamma} \leq \delta$, the unique solution $h$ of \normalfont{(4.2)} exists globally in time, $$h \in W^1_p(\R_+ ; X_0) \cap L_p(\R_+ ; D(A)),$$ and converges to some equilibrium solution in $X_\gamma$ at an exponential rate.
\end{theorem}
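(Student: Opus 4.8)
My plan is to verify the three conditions of normal stability and then to quote the generalized principle of linearized stability in the maximal regularity setting (\cite{pruessbuch}, \cite{pruessmulsek}) for the concluding assertions. Since $\Sigma_* = (0,L)$ is a bounded interval, the embedding $D(A) \hookrightarrow X_0$ is compact, so $-A$ has compact resolvent and $\sigma(-A)$ consists of isolated eigenvalues of finite algebraic multiplicity; all spectral claims may therefore be tested on eigenfunctions. Two structural facts will drive the argument. First, $R(A)$ consists of mean value free functions: for $h \in D(A)$ the potential $\mu = T_0(\Delta_{\Sigma_*} h)$ is harmonic in $\Omega \setminus \Sigma_*$, continuous across $\Sigma_*$, and satisfies the Neumann condition on $\partial\Omega$, so a divergence theorem argument on each phase gives $\int_{\Sigma_*} A h \, d\mathcal H^1 = \int_{\Sigma_*} \ljump n_{\Sigma_*} \cdot \nabla \mu \rjump \, d\mathcal H^1 = 0$. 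Second, for $h \in D(A)$ an integration by parts using the Robin conditions $\partial_x h(0) = -\omega_1 h(0)$ and $\partial_x h(L) = \omega_2 h(L)$ yields the identity $I_*(h,h) = \langle -\partial_x^2 h, h \rangle_{L_2(\Sigma_*)}$, where in the flat case $I_*(h,h) = \int_0^L |\partial_x h|^2 \, d\mathcal H^1 - \omega_1 |h(0)|^2 - \omega_2 |h(L)|^2$. For $\omega_1, \omega_2 \leq 0$ this form is bounded below by $\int_0^L |\partial_x h|^2 \, d\mathcal H^1$ and hence is strictly positive on nonzero mean value free functions, since a mean value free constant vanishes.

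For the spectral sign condition (3) I would combine the identity (3.3) with this positivity. Any eigenfunction $h$ to an eigenvalue $\lambda \neq 0$ of (3.2) is mean value free (integrate $(3.2)_1$ over $\Sigma_*$), so $I_*(h,h) > 0$; moreover $\int_\Omega |\nabla \mu|^2 \, dx > 0$, for otherwise $\mu$ is constant and $\lambda h = 0$. Then (3.3) forces $\lambda = -\int_\Omega |\nabla \mu|^2 \, dx / I_*(h,h) < 0$. Thus every nonzero eigenvalue of $-A$ is real and strictly negative, and reality of the full (pure point) spectrum is already recorded after (3.3), giving (3).

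For the kernel (1), if $A h = 0$ then $\mu = T_0(\Delta_{\Sigma_*} h)$ has vanishing normal derivative jump and is continuous across $\Sigma_*$, hence harmonic in all of $\Omega$ with homogeneous Neumann data on $\partial\Omega$, so $\mu$ is constant and $\partial_x^2 h$ is constant. Solving $\partial_x^2 h = \mathrm{const}$ under the two Robin conditions leaves, after a short computation in which the coefficient $L(1 - \omega_2 L/2)$ is seen to be strictly positive for $\omega_2 \leq 0$, a one dimensional solution space, so $N(A)$ is one dimensional; as the equilibria of the linear equation (4.2) are exactly the solutions of $A h = 0$, this is (1). I also record that this kernel is transversal to the mean value free functions: for $h \in N(A)$ the quantity $\partial_x^2 h$ is constant, so the identity above gives $I_*(h,h) = (-\partial_x^2 h)\,\overline{\int_0^L h \, d\mathcal H^1}$, and if $h$ were mean value free this would vanish, forcing $h = 0$ by strict positivity.

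Condition (2), the semisimplicity of the eigenvalue $0$, is the point requiring the most care and is where I expect the main difficulty to lie. Rather than compute generalized eigenspaces I would exploit the two structural facts directly: $R(A)$ lies in the mean value free functions while $N(A)$ is spanned by a function of nonzero mean, so $N(A) \cap R(A) = \{0\}$. Consequently, if $A^2 h = 0$ then $A h \in N(A) \cap R(A) = \{0\}$, whence $h \in N(A)$; thus $N(A^2) = N(A)$, and since $0$ is an isolated eigenvalue of finite algebraic multiplicity, it is semisimple and $X_0 = N(A) \oplus R(A)$. Equivalently, on the mean value free subspace $A$ is similar to the self-adjoint, positive definite operator associated with the form $I_*$, which yields reality, sign, and semisimplicity at once. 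Having verified (1)--(3), $h_* = 0$ is normally stable, and the generalized principle of linearized stability, applied in the maximal regularity class of \cite{mulsekpaper12}, provides the claimed global existence, stability in $X_\gamma$, and exponential convergence to an equilibrium in $N(A)$.
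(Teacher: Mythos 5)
Your proof is correct, and its overall skeleton coincides with the paper's: compactness of the embedding $D(A)\into X_0$ giving pure point spectrum; the energy identity (4.4) (equivalently (3.3)) for reality and sign of the eigenvalues; the ODE $\partial_x\partial_x h = c$ with the two Robin conditions for the kernel; semi-simplicity via $N(A^2)=N(A)$ plus the isolated-eigenvalue/finite-multiplicity argument (Lunardi, Remark A.2.4); and the Pr\"uss--Simonett--Zacher principle for the concluding assertions. The genuine difference lies in how $N(A^2)=N(A)$ is established. The paper takes $h_1 = Ah \in N(A)\cap R(A)$, inserts the explicit formula for kernel elements, and computes $\int_0^L h_1\,dx$ as $c_1$ times a rational expression in $\omega_1,\omega_2,L$ shown to be nonzero; this is carried out for $\omega_1,\omega_2<0$ and then repeated separately for the degenerate cases $\omega_1=0,\omega_2<0$ (and symmetrically), with $\omega_1=\omega_2=0$ referred back to \cite{mulsekpaper12}. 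You instead prove the abstract transversality $N(A)\cap R(A)=\{0\}$ from two structural facts: $R(A)$ consists of mean value free functions (divergence theorem in each phase), and the form identity $I_*(h,h)=\langle -\partial_x\partial_x h, h\rangle_{L_2}$, which is strictly positive on nonzero mean value free functions when $\omega_1,\omega_2\le 0$, yet vanishes on mean value free kernel elements because $-\partial_x\partial_x h$ is constant there. This buys a computation-free argument that treats all sign cases $\omega_1,\omega_2\le 0$ uniformly, at the price of not producing the explicit kernel formulas the paper records (and later reuses). Your explicit derivation of strict negativity of nonzero eigenvalues --- the bracket is strictly positive for a nonzero mean value free eigenfunction, and $\int_\Omega|\nabla\mu|^2>0$ since otherwise $\lambda h = 0$ --- is also slightly more careful than the paper's terse statement that (4.4) yields $\lambda$ real and $\lambda\le 0$, and this strictness is exactly what assertion (3) requires.
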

\begin{proof}
Consider some $\lambda \in \sigma(-A) \subset \mathbb C$ and the corresponding eigenvalue problem for the eigenfunction $h \in D(A)$,
\begin{equation} \label{345fghfghfgh74g5} \tag{4.3} \begin{cases} 
\begin{alignedat}{2}
\lambda h &= - \ljump n_{\Sigma_*} \cdot \nabla \mu \rjump, \quad\quad & \text{on } \Sigma_*, \\
\ljump \mu \rjump = 0, \quad \mu|_{\Sigma_*} &= \partial_x \partial_x h  , \quad\quad\quad &\text{on } \Sigma_*, \\
\Delta \mu &= 0, &\text{in } \Omega \backslash \Sigma_*, \\
n_{\partial\Omega} \cdot \nabla \mu|_{\partial \Omega} &= 0, &\text{on } \partial\Omega \backslash \partial\Sigma_*, \\
\partial_x h (0) &= - \omega_1 h(0), \\ \quad \partial_x h (L) &= \omega_2 h(L).
\end{alignedat} \end{cases}
\end{equation}
Testing equation $(4.3)_1$ with $\partial_x \partial_x \bar h$, an integration by parts invoking the boundary conditions yields
\begin{equation} \label{324jkl5h3kj4h5l3ghj45} \tag{4.4}
\lambda \left[ \int_0^L | \partial_x h |^2 dx - \omega_1 |h(0)|^2 - \omega_2 |h(L)|^2 \right] + \int_\Omega | \nabla \mu |^2  = 0.
\end{equation}
Let us characterise the kernel of $A$. Pick some $h \in N(A)$. Then (4.4) for $\lambda = 0$ entails that $\mu$ has to be constant, whence $\partial_x \partial_x h = c$ on $(0,L)$ for some $c \in \R$. In particular, by the fundamental theorem of calculus,
\begin{equation}
h(s) = h(0) + s \partial_x h(0) + \int_0^s \int_0^\tau \partial_x \partial_x h (\tau') d\tau' d\tau, \quad s \in [0,L],
\end{equation}
whence invoking the boundary condition gives
\begin{equation} \label{34khju456456jh45} \tag{4.5}
h(s) = h(0) - \omega_1 h(0) s + cs^2/2, \quad s \in [0,L].
\end{equation}
Let us start now with the case where $\omega_1, \omega_2 < 0$. By differentiating (4.5) and invoking the boundary condition at $x=L$ we obtain that
\begin{equation}
\partial_x h (L) = - \omega_1 h(0) + cL = \omega_2 h(L).
\end{equation}
Also from (4.5) we obtain that $h(L) = [1-\omega_1 L] h(0) + cL^2/2$. The linear system
\begin{equation} \label{3456hjkj45h6456} \tag{4.6}
\begin{bmatrix} h(0)  \\ h(L) \end{bmatrix} = \begin{bmatrix} 0 & -\omega_2/\omega_1 \\ 1- \omega_1 L & 0 \end{bmatrix} \begin{bmatrix} h(0)  \\ h(L) \end{bmatrix} + \begin{bmatrix} cL/\omega_1 \\ cL^2/2\end{bmatrix}
\end{equation}
now has a unique solution since $1+(1-\omega_1 L)\omega_2/\omega_1 \geq 1$ for any $\omega_1 <0, \omega_2 <0, L > 0$. Explicitly solving the linear system gives
\begin{equation}
h(0) = \frac{ c(L - \omega_2 L^2 /2) }{ \omega_1 + \omega_2 - \omega_1 \omega_2 L },
\end{equation}
which gives in combination with (4.5) a unique solution $h$ which depends linearly on $c$. Hence the kernel of $A$ is truly one-dimensional.

With this at hand we may now prove that zero is a semi-simple eigenvalue. Since $D(A)$ compactly embeds into $X_0$, the resolvent of $A_0$ is compact on the resolvent set. Therefore the spectrum only consists of at most countably many isolated eigenvalues. Furthermore, every spectral value in $\sigma(A)$ is a pole of finite algebraic multiplicity. Using Remark A.2.4 in \cite{lunardioptimal} it suffices to show that $N(A) = N(A^2)$. Then the range of $A$ is closed and $X_0 = N(A) \oplus R(A)$. So pick some $h \in N(A^2)$. Then $h_1 := Ah \in R(A) \cap N(A)$. Then $h_1$ is mean value free on $(0,L)$ and there is some $c_1 \in \R$ such that
\begin{equation}
h_1 (x) = c_1 \left[ \frac{ (L- \omega_2 L^2/2) (1- \omega_1 x) }{\omega_1 + \omega_2 - \omega_1 \omega_2 L} + \frac{x^2}{2} \right], \quad x \in [0,L].
\end{equation}
A straightforward integration gives
\begin{equation}
\int_0^L h_1 (x)dx = c_1 \frac{6L^2 + \omega_1 \omega_2 L^4 /2 - 2 (\omega_1 + \omega_2) L^3}{6 ( \omega_1 + \omega_2 - \omega_1 \omega_2 L)}.
\end{equation}
Now for any $L >0, \omega_1 , \omega_2 < 0$ the right hand side can only be zero if $c_1 = 0$. But then $h_1 = 0$ and $Ah = 0$. Hence $h$ belongs to the kernel of $A$. Then $N(A) = N(A^2)$ and zero is semi-simple. Furthermore, equation (4.4) yields that necessarily $\lambda$ is real and $\lambda \leq 0$. Hence the third assertion is proved. The rest of the statement is a consequence of the generalized principle of linearized stability of Pr\"uss, Simonett, and Zacher \cite{pruessmulsek}.

For completeness we shall show that zero is also semi-simple in the simpler case where $\omega_ 1 = 0, \omega_2 < 0$. The case $\omega_1 <0 , \omega_2 = 0$, follows the same lines. In the case $\omega_ 1 = 0, \omega_2 < 0$, the kernel of $A$ consists of functions $h$ of form $h_c (s) = c( L/\omega_2 - L^2/2 + s^2 /2)$ for $c \in \R$. Then the same arguments give that zero is semi-simple.
Note that in the case $\omega_1 = \omega_2 = 0$, the kernel of $A$ consists of the constant functions, cf. \cite{mulsekpaper12}.
\end{proof}
Let us now be concerned with the case when $\omega_1, \omega_2 > 0$, cf. Figure 1. We will show the following result for (4.2). For simplicity we will assume that $\omega_1 = \omega_2 =: \omega_+ > 0$.
\begin{theorem} \label{34985jhjjj34h54j5645}
\begin{enumerate}
\item For fixed $L > 0$, there exists some $\delta = \delta(L) > 0$, such that if $\omega_+  \leq \delta$, the solution $h_*=0$ is stable in $X_\gamma$. Furthermore, there exists some $\eta > 0$, such that if $|h_0|_\gamma \leq \eta$, the solution to the initial value $h_0$ exists on $\R_+$ and converges to the equilibrium point $h_\infty := \frac{1}{L} \int_0^L h_0 dx$ in $X_\gamma$ at an exponential rate.
\item For fixed $L > 0$, there exists some $K = K(L) > 0$, such that if $\omega_+  \geq K$, the solution $h_*=0$ is normally hyperbolic and unstable in $X_\gamma$.
\item For fixed $\omega_+ >0$, there is some $\delta > 0$ such that if $L_0 \leq \delta$, the interface $\Sigma = (0,L_0)$ corresponding to $h_* = 0$ is stable in $X_\gamma$. Moreover, the second statement of (1) holds.
\item For fixed $\omega_+ > 0$, there is some $K > 0$, such that if $L \geq K$, the interface $\Sigma_* = (0,L)$ corresponding to $h_* = 0$ is normally hyperbolic and unstable in $X_\gamma$.
\end{enumerate}
\end{theorem}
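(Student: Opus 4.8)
The plan is to reduce all four assertions to a single question: the sign of the bilinear form $I_*$ on mean-value-free functions. With $\omega_1 = \omega_2 = \omega_+$ this form reads $I_*(h,h) = \int_0^L |\partial_x h|^2\,dx - \omega_+(|h(0)|^2 + |h(L)|^2)$, and the energy identity (4.4) shows that every eigenvalue $\lambda$ of $-A$ with eigenfunction $h$ satisfies $\lambda\, I_*(h,h) = -\int_\Omega |\nabla\mu|^2\,dx$. Since any eigenfunction to $\lambda\neq 0$ is mean-value-free, and since $\int_\Omega|\nabla\mu|^2 > 0$ whenever $\lambda\neq 0$ (otherwise $\mu$ is constant and $\lambda h = 0$), positivity of $I_*$ on the mean-value-free subspace forces every nonzero eigenvalue to be strictly negative, while a single mean-value-free direction with $I_* < 0$ will produce an eigenvalue $\lambda > 0$. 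The scaling $x = Ly$ turns $\int_0^L|\partial_x h|^2$ into an $L^{-1}$-factor while leaving the boundary terms invariant, so $I_*$ depends on the data only through the product $\omega_+ L$. Hence the four statements are one and the same dichotomy $\omega_+ L \lessgtr 2$ read off with different parameters frozen.

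First I would pin down that threshold. The form $I_*$ fails to be positive definite on the mean-value-free subspace exactly when its constrained Euler--Lagrange problem has a nontrivial solution, that is, when there is a mean-value-free $h\not\equiv 0$ with $\partial_x^2 h$ constant and $\partial_x h(0) = -\omega_+ h(0)$, $\partial_x h(L) = \omega_+ h(L)$. Solving this quadratic ansatz yields exactly two degenerate values, $\omega_+ L = 2$ with null direction $h(x) = x - L/2$, and $\omega_+ L = 6$ with a parabolic null direction. Since $I_*$ is positive definite for $\omega_+ = 0$ (Poincaré on mean-value-free functions) and loses definiteness at the smaller value, it is positive definite precisely for $\omega_+ L < 2$. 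The same conclusion is visible from the test function $h(x) = x - L/2$, which is mean-value-free and gives $I_*(h,h) = L(1 - \omega_+ L/2)$; this will be the explicit destabilizing direction once $\omega_+ L > 2$.

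For the stability statements (1) and (3) I would take $\delta(L) < 2/L$, respectively $\delta < 2/\omega_+$, so that $\omega_+ L < 2$ and $I_*$ is positive definite. By the reduction above, $\sigma(-A)\setminus\{0\}\subset\R_-$; since $D(A)\hookrightarrow\hookrightarrow X_0$ the resolvent is compact, the spectrum is discrete and $0$ is isolated, which gives the spectral gap. It then remains to verify the three normal-stability conditions exactly as in the proof of Theorem \ref{jkh34jkh5g34h5345}: the kernel is one-dimensional, spanned by $h(x) = a(1 - \omega_+ x + \omega_+ x^2/L)$ (valid since $\omega_+ L\neq 2$), and it is semi-simple because a kernel element lying in $R(A)\subset\{\text{mean-value-free}\}$ must vanish, its mean being $a(1 - \omega_+ L/6)\neq 0$ for $\omega_+ L < 2$. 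The generalized principle of linearized stability of Pr\"uss, Simonett and Zacher then yields stability in $X_\gamma$ and exponential convergence to the equilibrium $h_\infty = \frac1L\int_0^L h_0\,dx$.

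For the instability statements (2) and (4) I would take $K(L) > 2/L$, respectively $K > 2/\omega_+$, so that $\omega_+ L > 2$ and $I_*(x - L/2,\,x - L/2) < 0$. The core analytic step, and the point I expect to be the main obstacle, is to convert this negative direction into a genuine eigenvalue $\lambda > 0$ of $-A$: because $A$ acts on the Besov space $X_0 = W^{1-1/q}_q(\Sigma_*)$ and is not manifestly self-adjoint, I would first argue that its spectrum is independent of the base space and agrees with that of the $L_2$-realization, whose eigenvalue problem reads $\lambda h = \mathcal N(\partial_x^2 h)$ with $\mathcal N := -\ljump n_{\Sigma_*}\cdot\nabla\, \rjump\circ T_0$ the nonnegative Dirichlet-to-Neumann operator. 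Conjugating by $\mathcal N^{1/2}$ on the mean-value-free sector, where $\mathcal N$ is invertible, produces a self-adjoint operator whose quadratic form is exactly $I_*$, so that its negative eigenvalues are in index-counting correspondence with the positive eigenvalues of $-A$ and hence with the negative directions of $I_*$; this furnishes a finite-dimensional unstable eigenspace. Normal hyperbolicity on the imaginary axis is then immediate, since (4.4) forces every eigenvalue to be real, whence $\sigma(-A)\cap i\R\subseteq\{0\}$; the structure at $0$ (one-dimensional, semi-simple) follows from the same kernel computation, the only delicate bookkeeping being the isolated value $\omega_+ L = 6$, at which the kernel element becomes mean-value-free and semi-simplicity has to be re-examined. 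The instability half of the generalized principle of linearized stability then shows that $h_* = 0$ is unstable in $X_\gamma$.
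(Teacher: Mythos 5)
Your proposal is correct in substance and shares the paper's skeleton: the energy identity (4.4) reduces everything to the sign of $I_*$ on mean-value-free functions, the kernel of $A$ is the one-parameter family of parabolas $a(1-\omega_+x+\omega_+x^2/L)$ with semi-simplicity read off from their mean, and the Pr\"uss--Simonett--Zacher principle delivers the dynamical conclusions. It differs in two genuine ways, both worth recording. First, your scaling observation that the sign of $I_*$ depends only on the product $\omega_+L$, combined with the constrained Euler--Lagrange computation locating the degeneracies at $\omega_+L\in\{2,6\}$ and the explicit test function $x-L/2$, unifies all four assertions into one dichotomy and yields the sharp threshold $\omega_+L=2$; the paper instead treats (1) and (3) by qualitative smallness (Poincar\'e--Wirtinger plus the embedding $H^1_2\hookrightarrow C^0$, respectively an explicit computation of the embedding constant), and (2) and (4) with the piecewise-linear test function $\bar g$ of (4.15), only remarking afterwards that $2/L$ is the critical value. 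One caveat: $x-L/2$ satisfies the Robin conditions only when $\omega_+L=2$, so to use it you must either work at the level of the quadratic form (for which the Robin conditions are natural, not essential, boundary conditions) or insert an approximation step; the paper avoids this by building $\bar g$ to satisfy the boundary conditions exactly and then approximating in $D(B_\lambda)$. Second, to convert a negative direction of $I_*$ into a positive eigenvalue, the paper studies the pencil $B_\lambda=\lambda N_{MS}-(I-P_0)\tilde\Delta$ (positive definite for large $\lambda$ via the trace estimate of Lemma 4.3, indefinite for small $\lambda>0$, then a continuation argument following Wilke), whereas you conjugate by $\mathcal N^{1/2}$ to obtain a single self-adjoint operator whose form is $-I_*$ and invoke min-max; this is the legitimate ``$H^{-1/2}$-gradient-flow'' viewpoint, but it is not cheaper: self-adjointness of $\mathcal N^{1/2}\partial_x^2\mathcal N^{1/2}$ with compact resolvent, and above all boundedness of its form from above, require the same kind of trace and interpolation control that the paper encapsulates in Lemma 4.3, so the technical debt is comparable rather than avoided. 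Finally, your flagging of $\omega_+L=6$ is a genuinely good catch rather than mere bookkeeping: there the kernel parabola is mean-value-free, one can in fact solve $Ag=h_0$ for the kernel element $h_0$, so zero is \emph{not} semi-simple and normal hyperbolicity fails; consequently in (2) and (4) the constants must be taken with $\omega_+L>6$ (e.g. $K>6/L$, respectively $K>6/\omega_+$), a point the paper's ``arguing as in the proof of Theorem 4.1'' passes over and which is only harmless because $K$ may be chosen large.
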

\begin{proof}
Let $L , \omega_+ > 0$ and $\Sigma_* = (0,L)$. Let $A$ be the linear operator of (4.2). Let us be concerned with the kernel of $A$. Again if $Ah = 0$, the corresponding chemical potential $\mu = T_0 \partial_x \partial_x h$ is constant and therefore $\partial_x \partial_x h = c$ for some $c \in \R$. As before, $h$ can be written as $h(s) = (1 - \omega_+ s)h(0) + cs^2/2$ for all $s \in [0,L]$. The corresponding linear system for $[h(0),h(L)] \in \R^2$ in (4.6) can be uniquely solved whenever $2 - L\omega_+ \not = 0$. Note that for either fixed $L > 0$ or $\omega_+ > 0$, this can be ensured by choosing $\delta > 0$ sufficiently small or $K > 0$ sufficiently large. In any case,
\begin{equation} \label{j3kh45k345} \tag{4.7}
h(s) = h(0)[1-\omega_+ s] + cs^2/2, \; s \in [0,L], \quad h(0) = c \frac{ L - \omega_+ L^2/2}{\omega_+ (2-\omega_+ L)}.
\end{equation}
Arguing as in the proof of Theorem 4.1 we can show that the kernel of $A$ is truly one dimensional, given by functions of type (4.7) for $c \in \R$. Hence, 
$X_0 = N(A) \oplus R(A)$ and the eigenvalue zero is semi-simple.

(1) For some $0 \not= \lambda \in \sigma(-A) \subset \mathbb C$ and a corresponding eigenfunction $h \in D(A)$, the eigenvalue problem again reads as
\begin{equation} \label{345dfgdfgdfg5} \tag{4.8}
\begin{alignedat}{2}
\lambda h &= - \ljump n_{\Sigma_*} \cdot \nabla \mu \rjump, \quad\quad & \text{on } \Sigma_*, \\
\ljump \mu \rjump = 0, \quad \mu|_{\Sigma_*} &= \partial_x \partial_x h  , \quad\quad\quad &\text{on } \Sigma_*, \\
\Delta \mu &= 0, &\text{in } \Omega \backslash \Sigma_*, \\
n_{\partial\Omega} \cdot \nabla \mu|_{\partial \Omega} &= 0, &\text{on } \partial\Omega \backslash \partial\Sigma_*, \\
\partial_x h (0) &= - \omega_+ h(0), \\ \partial_x h (L) &= \omega_+ h(L).
\end{alignedat}
\end{equation}
Necessarily,
\begin{equation}
\lambda \left[ \int_0^L | \partial_x h |^2 dx - \omega_+ |h(0)|^2 - \omega_+ |h(L)|^2 \right] + \int_\Omega | \nabla \mu |^2  = 0.
\end{equation}
We aim to show that the term in brackets is still positive if $\omega_+ \leq \delta$ and $\delta > 0$ is small. Integrating $\eqref{345dfgdfgdfg5}_1$ over $(0,L)$ yields that $h$ is mean value free. Hence we can use the Poincare-Wirtinger inequality to deduce
\begin{equation}\label{3498540958760456} \tag{4.9}
\begin{alignedat}{1} 
\int_0^L | & \partial_x h |^2 dx - \omega_+ |h(0)|^2 - \omega_+ |h(L)|^2 \\ &\geq c_0(L) |h|_{H^1_2(0,L)}^2 - \omega_+ |h(0)|^2 - \omega_+ |h(L)|^2 \\
&\geq \tilde c_0(L) |h|_{C^0([0,L])}^2 - \omega_+| h(0)|^2 - \omega_+ |h(L)|^2,
\end{alignedat}
\end{equation}
for some $c_0, \tilde c_0 > 0$, since $H^1_2(0,L) \into C^0 ([0,L])$. Hence the first claim follows if $\delta > 0$ is sufficiently small.

(2) Again we fix $L > 0$. We need to show that if $\omega_+ \geq K$ for $K >0$ large, there is a positive eigenvalue $\lambda > 0$ of $-A$. For $\lambda > 0$ we can rewrite the eigenvalue problem
(4.8) as
\begin{equation} \label{3ghjgdfghj957} \tag{4.10}
\lambda h - D_{MS} \tilde \Delta h = 0,
\end{equation}
where $\tilde \Delta : D(\tilde \Delta) \subset X_1 \pfeil X_0$ is given by $\tilde \Delta h := \partial_x \partial_x h$ with domain $D(\tilde \Delta) := W^{4-1/q}_q (\Sigma_*) \cap \{ \partial_x h (0) = - \omega_+ h(0), \; \partial_x h (L) = \omega_+ h(L) \}$. Furthermore, we define the Dirichlet-to-Neumann operator $D_{MS}$ as follows. For given $g \in W^{2-1/q}_q(0,L)$, we solve the two-phase elliptic problem
\begin{align}
\Delta \theta &= 0, &\text{in } \Omega \backslash \Sigma_*, \\
\ljump \theta \rjump = 0, \quad \theta|_{\Sigma_*} &= g  , \quad\quad\quad &\text{on } \Sigma_*, \\
n_{\partial\Omega} \cdot \nabla \theta|_{\partial \Omega} &= 0, &\text{on } \partial\Omega \backslash \partial\Sigma_*, 
\end{align}
uniquely by $\theta \in W^2_q(\Omega \backslash \Sigma_*)$ and define $D_{MS} g := - \ljump n_{\Sigma_*} \cdot \nabla \theta \rjump$. The inverse Neumann-to-Dirichlet operator $$N_{MS} = [D_{MS}]^{-1} : W^{1-1/q}_{q,(0)}(\Sigma_*) \pfeil W^{2-1/q}_{q,(0)}(\Sigma_*)$$ then admits a compact, selfadjoint extension to $L_{2,(0)}(\Sigma_*)$, cf. \cite{rtipaper}. It is also shown there that
$N_{MS}$ is injective on $L_{2,(0)}(\Sigma_*)$. Note however that $\partial_x \partial_x h$ is not mean value free on $\Sigma_*$, even though $h$ is. We may however rewrite (4.10) as
\begin{equation} \label{32490857348957}
\lambda h - D_{MS} (I-P_0)\tilde \Delta h = -D_{MS} P_0 \tilde \Delta h,
\end{equation}
where $P_0 v$ is the mean value of $v$. Next note that $D_{MS} P_0 \tilde \Delta h = 0$, since $P_0 \tilde \Delta h$ is constant. Applying $N_{MS}$ then gives that (4.10) is equivalent to
\begin{equation} \label{32490857348957}
\lambda N_{MS} h -  (I-P_0)\tilde \Delta h = 0.
\end{equation}
Hereby we understand $\tilde \Delta$ as the natural extension to $H^2_2(\Sigma_*)$. We may now follow the lines of \cite{rtipaper}, \cite{wilkehabil}, \cite{pruessbuch}. Define $B_\lambda := \lambda N_{MS} - (I- P_0)\tilde \Delta$ with natural domain $D(B_\lambda) := H^2_{2,(0)}(\Sigma_*) \cap \{ \partial_x h (0) = - \omega_+ h(0), \; \partial_x h (L) = \omega_+ h(L) \}$.
We will now show that
\begin{equation} \label{34534534853489ghjghjghj5345} \tag{4.11}
B_\lambda \text{ is } \begin{cases}
\text{positive definite,} &\text{if $\lambda \geq \lambda_0$ for some $\lambda_0 > 0$}, \\
\text{not positive definite,} &\text{if $\lambda > 0$ is sufficiently small}.
\end{cases}
\end{equation}
In \cite{rtipaper} it is shown that $N_{MS}$ is positive definite on $L_{2,(0)}(\Sigma)$, hence there is some $d_0 > 0$ such that
\begin{align}
(B_\lambda h |h)_2 &= \lambda (N_{MS} h|h)_2 - (\partial_x \partial_x h | h)_2 + (P_0 \partial_x \partial_x h|h)_2 \\
&\geq \lambda d_0 |h|_2^2 + | \partial_x h|_2^2 - \omega_+ [ h(0)^2 + h(L)^2],
\end{align}
since $(P_0 \partial_x \partial_x h|h)_2 = 0$. It remains to show that 
\begin{equation} \label{9867087560785sdf} \tag{4.12}
(\lambda -1)d_0 |h|_2^2 + | \partial_x h|_2^2 - \omega_+ [ h(0)^2 + h(L)^2] \geq 0,
\end{equation}
if only $\lambda \geq \lambda_0$ for $\lambda_0 > 0$ sufficiently large. Then $B_\lambda$ is positive definite for $\lambda \geq \lambda_0$. We now claim the following Young-type inequality. Note that the following lemma immediately implies (4.12).
\begin{lemma} \label{2345hijou34h5u3h45ij34}
For every $\delta > 0$ there is a constant $C_\delta > 0$, such that
\begin{equation}
h(j)^2 \leq \delta | \partial_x h|^2_2 + C_\delta |h|_2^2, \quad j = 0,L,
\end{equation}
for any $h \in H^1_{2}(\Sigma_*)$.
\end{lemma}
\begin{proof}
The proof follows the lines of \cite{garckeitokosaka}. Assume there is some $\delta > 0$ such that the statement is not true. Then there is a sequence $(h_n)_n \subset H^1_{2}(\Sigma_*)$, such that 
\begin{equation} \tag{4.13}
\label{3849573408753} 1 = h_n(0)^2 > \delta | \partial_x h_n|_2^2 + n |h_n|_2^2, \quad \text{for all } n \in \mathbb N.
\end{equation}
In particular, $| \partial_x h_n|_2^2 < 1/\delta$ and $|h_n|_2^2 < 1/n$ for each $n$. Hence $(h_n)_n$ is bounded in $H^1_2$ and there is a subsequence again denoted by $(h_n)_n$ converging weakly to some $h$ in $H^1_{2}$. Furthermore, $h_n$ converges strongly to zero in $L_2$. By uniqueness, $h_n$ converges weakly to zero in $H^1_2$. By the compact embedding $H^1_2(\Sigma_*) \into \into C^0( [0,L] )$, $h_n$ converges strongly in $C^0$-norm to zero as $n \pfeil \infty$. This implies $h_n (0) \pfeil 0$ as $n \pfeil \infty$, which is a contradiction to (4.13). 
\end{proof}
We now show the second part of (4.11). Note that
\begin{equation}
\lim_{\lambda \pfeil 0, \lambda > 0} (B_\lambda h |h)_2 = - (\partial_x \partial_x h | h)_2 =  | \partial_x h|_2^2 - \omega_+ [ h(0)^2 + h(L)^2], \quad h \in D(B_\lambda),
\end{equation}
since $\lim_{\lambda \pfeil 0, \lambda > 0}  \lambda (N_{MS} h|h)_2 =0$ for any fixed $h \in D(B_\lambda)$. It now remains to construct a function $\bar h \in D(B_\lambda)$ such that
\begin{equation} \label{23489dfg53478345} \tag{4.14}
| \partial_x \bar h|_2^2 - \omega_+ [ \bar h(0)^2 + \bar h(L)^2] < 0.
\end{equation}
We start with the following construction. Let $\varepsilon > 0$ and define 
\begin{equation} \label{3453fghfgh345} \tag{4.15}
\bar g(s) := \begin{cases}
1- \omega_+ s, & s \in [0, \varepsilon], \\
1- \omega_+ \varepsilon - (1-\omega_+\varepsilon)(s-\varepsilon)/(L/2-\varepsilon), & s \in [\varepsilon, L/2], \\
-g(s-L), & s \in [L/2, L],
\end{cases}
\end{equation}
cf. Figure 2.
\begin{figure}[h]
\begin{tikzpicture}[scale = 0.65]
\draw[step=0.5cm,gray,very thin] (-1.9,-2.9) grid (9.9,2.9);
\draw[thick,->] (0,0) -- (9,0) node[anchor=north west] {$x$};
\draw[thick,->] (0,-2.5) -- (0,2.5) node[anchor=south east] {$\bar g (x)$};
    \draw [thick] (0.5 cm,2pt) -- (0.5 cm,-2pt) node[anchor=north] {$\varepsilon$};
    \draw[thick]  (4 cm,2pt) -- (4 cm,-2pt) node[anchor=north] {$L/2$};
        \draw[thick]  (7.5 cm,2pt) -- (7.5 cm,-2pt) node[anchor=south] {$L - \varepsilon$};
          \draw[thick]  (8 cm,2pt) -- (8 cm,-2pt) node[anchor=north] {$L$};
\foreach \y in {-1,0,1}
    \draw [thick] (1pt,-2 cm) -- (-1pt,-2 cm) node[anchor=east] {$-1$};
    \draw[thick]  (1pt,2 cm) -- (-1pt,2 cm) node[anchor=east] {$1$};
      \draw [thick] (0,2) -- (0.5,1);
      \draw [thick] (0.5,1) -- (7.5,-1);
         \draw [thick] (8,-2) -- (7.5,-1);
\end{tikzpicture}
\caption{Construction of $\bar g$.}
\label{figurdfgdfgdfg}
\end{figure}
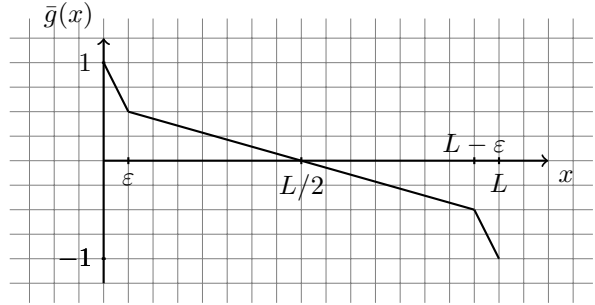
Then $\bar g$ satisfies the boundary conditions $\partial_x \bar g(0) = - \omega_+ \bar g(0)$ and  $\partial_x \bar g(L) = \omega_+ \bar g(L)$. Clearly, $g \in H^1_{2,(0)} (\Sigma_*)$. Furthermore a direct calculation shows
\begin{align} 
| \partial_x \bar g|_{L_2(0,L/2)}^2 - \omega_+ \bar g(0)^2 &= \int_0^\varepsilon \omega_+^2 dx + \int_\varepsilon^{L/2} \frac{(1 - \omega_+ \varepsilon)^2}{ (L/2 - \varepsilon)^2} dx - \omega_+ \\
&= \varepsilon \omega_+^2  +  \frac{(1 - \omega_+ \varepsilon)^2}{ L/2 - \varepsilon}  - \omega_+.
\end{align}
In particular, the first two terms converge to $2/L$ as $\varepsilon \pfeil 0$. If now $\omega_+ > 2/L$, the right hand side will be negative whenever $\varepsilon = \varepsilon(\omega_+) >0$ is small enough. Note that the critical value for $\omega_+$ is $2/L$, which is exactly the degeneracy of the linear system for $[h(0),h(L)] \in \R^2$ in (4.6): $2 - L\omega_+  = 0$. By approximating $\bar g$ with a smoother function in $D(B_\lambda)$ we have shown (4.14). Following \cite{wilkehabil} using (4.11) we obtain that there is indeed a positive eigenvalue $\lambda > 0$ as claimed.

(3) Fix now $\omega_+ > 0$. We now need to understand the dependence on $L$ in estimate (4.9). 
Let us calculate the embedding constant of $H^1_{2,(0)} (0,L) \into C^0 ([0,L])$. Firstly,
\begin{equation}
h(t) = h(s) + \int_s^t \partial_x h (\tau) d\tau, \quad s,t \in [0,L].
\end{equation}
Integrating over $s \in [0,L]$ and using that $h$ is mean value free on $(0,L)$ gives
\begin{equation}
h(t) =  \frac{1}{L} \int_0^L \int_s^t \partial_x h (\tau) d\tau ds , \quad s, t \in [0,L].
\end{equation}
Hence
\begin{equation}
\sup_{t \in [0,L]} |h(t)| \leq  \int_0^L | \partial_x h (\tau) | d\tau.
\end{equation}
H\"olders inequality gives
\begin{equation}
\sup_{t \in [0,L]} |h(t)|^2 \leq  L \int_0^L | \partial_x h (\tau) |^2 d\tau.
\end{equation}
In particular,
\begin{equation}\label{34985dfgdfg0456}
\begin{alignedat}{1} 
\int_0^L | & \partial_x h |^2 dx - \omega_+ h(0)^2 - \omega_+ h(L)^2 \\ &\geq \frac{1}{L} |h|_{C^0([0,L])}^2 - \omega_+ [ h(0)^2 + h(L)^2 ].
\end{alignedat}
\end{equation}
Again we see that if $L = L(\omega_+) >0$ is sufficiently small, 
\begin{equation}\label{34985dfgdfg0456}
\begin{alignedat}{1} 
\int_0^L | & \partial_x h |^2 dx - \omega_+ [h(0)^2 + h(L)^2 ]  \geq (\frac{1}{L} - 2\omega_+) |h|_{C^0([0,L])}^2 \geq 0.
\end{alignedat}
\end{equation}
Hence (3) follows.

(4) We fix $\omega_+ > 0$. Following the lines of the proof of (2), we only need to justify $(4.11)_2$, where $B_\lambda$ is defined as before. In particular,
we need to show that there is a function $\bar h \in D(B_\lambda)$ such that
\begin{equation} \label{23489dfg53hjvjh478345}
| \partial_x \bar h|_{L_2(0,L)}^2 - \omega_+ [ \bar h(0)^2 + \bar h(L)^2] < 0,
\end{equation}
where now $\omega_+ >0$ is fixed, if we only choose $L>0$ large enough. Let us consider the function $\bar g$ defined in (4.15). Again for $\varepsilon > 0$,
\begin{align} 
| \partial_x \bar g|_{L_2(0,L/2)}^2 - \omega_+ \bar g(0) = \varepsilon \omega_+^2  +  \frac{(1 - \omega_+ \varepsilon)^2}{ L/2 - \varepsilon}  - \omega_+.
\end{align}
Since $\omega_+ >0$ is fixed, we may choose $\varepsilon > 0$ so small, such that $\varepsilon \omega_+^2 \leq \omega_+/2$. Then choosing $L = L(\omega_+ ) > 0$ sufficiently large we obtain that $| \partial_x \bar g|_{L_2(0,L/2)}^2 - \omega_+ \bar g(0) < 0$. We can then follow the lines of the proof of (2) to conclude (4).
\end{proof}
\begin{remark}
For monotonicity considerations of the spectral properties we refer to \cite{garckeitokosaka}.
\end{remark}
\section{Curved stationary solutions}
In this section we consider stationary solutions $\Sigma_*$ with constant curvature $\kappa_* = - 1/R$, for some $R >0$. In particular, $\Sigma_*$ is part of a circle. We can therefore introduce a parametrization by arc length,
\begin{equation}
\psi : (0,l) \pfeil \Sigma_*, \; \sigma \pfeil \psi (\sigma),
\end{equation}
where $l > 0$ is the length of the curve and $\sigma$ the arc length parameter. Note that $l < 2\pi R = 2\pi/|\kappa_*|$. Note that this induces an extra restriction on $\kappa_*$ and $l$,
\begin{equation} \label{3493095645064596} \tag{5.1}
| \kappa_* | l < 2 \pi.
\end{equation}
Corresponding to (3.1) we now want to make a linear stability analysis for 
 \begin{equation} \label{345dfg45dfudf5} \tag{5.2}
\begin{alignedat}{2}
\partial_t \rho &= - \ljump n_{\Sigma_*} \cdot \nabla \mu \rjump, \quad\quad & \text{on } \Sigma_*, \\
\mu|_{\Sigma_*} &= \partial_\sigma \partial_\sigma \rho + \kappa_*^2 \rho , \quad\quad\quad &\text{on } \Sigma_*, \\
\Delta \mu &= 0, &\text{in } \Omega \backslash \Sigma_*, \\
n_{\partial\Omega} \cdot \nabla \mu|_{\partial \Omega} &= 0, &\text{on } \partial\Omega \backslash \partial\Sigma_*, \\
\partial_\sigma \rho (0) &=  - \omega_1 \rho (0), \\
\partial_\sigma \rho (l) &=  \omega_2 \rho (l), \\
\rho (0) &= \rho_0, &\text{on }\Sigma_* .
\end{alignedat}
\end{equation}
Note that by some abuse of notation we may identify $\sigma \in (0,l)$ and $\psi(\sigma) \in \Sigma_*$, since there is no danger of confusion.

Let us rewrite (5.2) again as an abstract evolution equation, cf. \cite{mulsekpaper12}. Let $3/2 < q < 2$, $
X_0 := W^{1-1/q}_q(0,l)$, and $ X_1 := W^{4-1/q}_q(0,l)$.
Define now a linear operator $A : D(A) \subset X_1 \pfeil X_0$ by means of
$A \rho :=  B T_0 ( \partial_\sigma \partial_\sigma \rho + \kappa_*^2 \rho )$, where
 $B u := \ljump n_{\Sigma_*} \cdot \nabla u \rjump$ and $T_0 v$ is the unique solution of the two-phase elliptic problem \eqref{345fdfgdfgh4g5}.

The domain of $A$ is thereby given by
\begin{equation} \label{345j3kh4jk5645} \tag{5.3}
D(A) := X_1 \cap \{ \rho : \partial_\sigma \rho (0) = - \omega_1 \rho(0), \; \partial_\sigma \rho (l) = \omega_2 \rho(l) \}.
\end{equation}
We can then rewrite (5.2) as the abstract evolutionary problem
\begin{equation} \label{kjlhfghfgh} \tag{5.4}
\dot \rho + A \rho = 0, \; t > 0, \quad \rho(0) = \rho_0.
\end{equation}
Let again $p \in (6,\infty)$, $q \in (19/10,2) \cap (2p/(p+1), 2)$ and $X_\gamma := B^{4-1/q-3/p}_{qp} (0,l)$.

We start with a positive result on exponential stability for \eqref{kjlhfghfgh} of the trivial solution $\rho_* = 0$.
\begin{theorem} \label{jkh3dfgdfg45} 

Let $l > 0$ be fixed. 
Then there is some $\delta = \delta(l) >0$, such than whenever $|\kappa_*| \in (0, \delta)$ and $\omega_1, \omega_2 \in (-\infty, \delta)$, the trivial equilibrium
 $\rho_* = 0$ is normally stable, that is:
\begin{enumerate}
\item $A$ has maximal $L_p$-regularity.
\item The set of equilibria of (5.4) is the kernel of $A$, which has finite dimension $m \in \mathbb N \cup \{ 0 \}, m < \infty$.
\item The eigenvalue zero is semi-simple, $X_0 = N(A) \oplus R(A)$.
\item The spectrum satisfies $\sigma (-A) \backslash \{ 0 \} \subset \mathbb C_- := \{ z \in \mathbb C : \operatorname{Re} z < 0 \} $.
\end{enumerate}
In particular, $\rho_* = 0$ is stable in $X_\gamma$ and there is some $\delta_1 > 0$, such that if $|\rho_0 |_{X_\gamma} \leq \delta_1$ the unique solution to (5.4) with respect to the initial value $\rho_0$ exists globally in time,
$$ \rho \in W^1_p(\R_+ ; X_0) \cap L_p(\R_+ ; D(A)),$$ and converges to some equilibrium solution in $X_\gamma$ at an exponential rate.
\end{theorem}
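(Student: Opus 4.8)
The plan is to verify the four structural assertions (1)--(4) and then invoke the generalized principle of linearized stability of Pr\"uss, Simonett and Zacher, exactly as in the proof of Theorem 4.1. Maximal $L_p$-regularity (assertion (1)) is inherited from the flat operator by the same perturbation argument as in Theorem 4.8 of \cite{mulsekpaper12}: passing from the flat to the curved stationary solution only introduces the lower order term $\kappa_*^2 \rho$ in the curvature operator and the zeroth order boundary contributions governed by $\omega_1,\omega_2$, all of which are relatively compact perturbations of the principal part. Since $D(A)$ embeds compactly into $X_0$, the resolvent of $A$ is compact, the spectrum is a discrete set of eigenvalues of finite algebraic multiplicity, and in particular $N(A)$ is finite dimensional; this yields the finite number $m$ in assertion (2) once the set of equilibria is identified with $N(A)$, which is immediate since a time-independent solution of (5.4) satisfies $A\rho = 0$.

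The analytic heart of the argument is the positivity, on the subspace of mean value free functions, of the bilinear form
\[
I_*(\rho,\rho) = \int_0^l |\partial_\sigma \rho|^2 \, d\sigma - \omega_1 |\rho(0)|^2 - \omega_2 |\rho(l)|^2 - \kappa_*^2 \int_0^l |\rho|^2 \, d\sigma,
\]
which is the realisation of the bracket in (3.3) in the present arc length coordinates. I would estimate the three negative contributions against $\int_0^l |\partial_\sigma \rho|^2 \, d\sigma$: the Poincar\'e--Wirtinger inequality gives $\int_0^l |\rho|^2 \, d\sigma \leq (l/\pi)^2 \int_0^l |\partial_\sigma \rho|^2 \, d\sigma$, while the trace estimate of Lemma 4.3, or equivalently the explicit computation $\sup_{[0,l]} |\rho|^2 \leq l \int_0^l |\partial_\sigma \rho|^2 \, d\sigma$ used in part (3) of Theorem 4.2, controls the boundary terms whenever $\omega_1,\omega_2$ are positive, the contributions with $\omega_i \leq 0$ being already favourable. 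Since in either sign case $-\omega_i |\rho(i)|^2 \geq -\delta |\rho(i)|^2$ when $\omega_i \leq \delta$, this yields
\[
I_*(\rho,\rho) \geq \left[ 1 - 2\delta l - \delta^2 (l/\pi)^2 \right] \int_0^l |\partial_\sigma \rho|^2 \, d\sigma
\]
for mean value free $\rho$ whenever $|\kappa_*| \leq \delta$ and $\omega_1,\omega_2 \leq \delta$. For fixed $l$ the bracket is strictly positive once $\delta = \delta(l)$ is chosen small enough, so $I_*$ is positive on every nonzero mean value free function.

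With positivity of $I_*$ at hand the remaining assertions follow cleanly, and I would deliberately avoid an explicit trigonometric computation of the kernel. For semi-simplicity (assertion (3)) I would, as in Theorem 4.1, take $\rho \in N(A^2)$ and set $\rho_1 := A\rho \in N(A) \cap R(A)$; since the range of $A$ consists of mean value free functions (integrate $A\rho = \ljump n_{\Sigma_*} \cdot \nabla \mu \rjump$ over $\Sigma_*$ and apply the divergence theorem together with the Neumann condition on $\partial\Omega$), $\rho_1$ is mean value free, while $\rho_1 \in N(A)$ forces the associated potential to be constant and hence $\partial_\sigma \partial_\sigma \rho_1 + \kappa_*^2 \rho_1$ to be constant on $(0,l)$. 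Testing this identity with $\rho_1$ and integrating by parts using the boundary conditions gives precisely $I_*(\rho_1,\rho_1) = 0$, so positivity forces $\rho_1 = 0$; thus $N(A) = N(A^2)$ and Remark A.2.4 in \cite{lunardioptimal} yields $X_0 = N(A) \oplus R(A)$. For the spectral bound (assertion (4)) I would reproduce the identity $\lambda I_*(\rho,\rho) + \int_\Omega |\nabla \mu|^2 \, dx = 0$ of (3.3) for an eigenpair; this first shows that $\lambda$ is real, and since every eigenfunction to $\lambda \neq 0$ is mean value free, positivity of $I_*$ together with $\int_\Omega |\nabla \mu|^2 \, dx > 0$ (otherwise $\mu$ is constant and $\rho = 0$) forces $\lambda < 0$.

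The main obstacle is the bookkeeping of the $l$-dependence of the constants: both the Poincar\'e and the trace constants degrade as $l$ grows, which is exactly why $\delta$ must depend on $l$ and why uniform positivity of $I_*$ cannot persist for all lengths --- consistent with the instability obtained for large $l$ in Theorem 4.2. Once the four structural assertions are established, the asserted stability of $\rho_* = 0$ in $X_\gamma$, global existence for small data, and exponential convergence to an equilibrium are a direct application of the generalized principle of linearized stability, cf. \cite{pruessmulsek}, \cite{pruessbuch}.
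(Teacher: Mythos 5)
Your proposal is correct and follows essentially the same route as the paper's proof: maximal $L_p$-regularity by a perturbation argument, compactness of the resolvent to get a discrete spectrum and finite-dimensional kernel, semi-simplicity via $N(A)=N(A^2)$ (Remark A.2.4 in \cite{lunardioptimal}) with the boundary and curvature terms absorbed through Poincar\'e--Wirtinger and trace estimates, the eigenvalue identity (5.7) to force $\lambda<0$ for $\lambda\neq 0$, and finally the generalized principle of linearized stability of Pr\"uss--Simonett--Zacher. Your only deviations are presentational --- you isolate the positivity of $I_*$ as a single lemma with explicit constants and reuse it in both the semi-simplicity and spectral steps, whereas the paper runs the absorption argument twice --- which, if anything, tightens the exposition.
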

\begin{proof}
For any $\kappa_*$ constant we note that the term $\kappa_* \rho$ is a compact perturbation of $\partial_\sigma \partial_\sigma \rho$ in $W^{2-1/q}_q(0,l)$, whence $A$ has maximal $L_p$-regularity by a perturbation argument, cf. \cite{mulsekpaper12}.
Let us now characterize the kernel of $A$. Since the domain $D(A)$ compactly embeds into $X_0$, the resolvent of $A$ is compact. The spectrum then consists solely of isolated eigenvalues of finite multiplicity. In particular, the kernel, if it is nontrivial, has finite dimension $m < \infty$, cf. \cite{engelnagel}, \cite{lunardioptimal}, \cite{lunardiinterpol}. Pick some $\rho \in D(A)$ such that $A\rho = 0$. Then the solution of the corresponding elliptic problem is constant, hence $\partial_ \sigma \partial_\sigma \rho + \kappa_*^2 \rho$ is constant. 
Therefore the kernel of $A$ is given by the solutions $\rho$ of
\begin{equation} \begin{alignedat}{2}
\partial_\sigma \partial_\sigma \rho + \kappa_*^2 \rho &= c, \quad &\text{on } (0,l), \\
\partial_\sigma \rho (0) &= - \omega_1 \rho(0), \quad\quad \\
\partial_\sigma \rho (l) &= \omega_2 \rho(l),
\end{alignedat}
\end{equation}
where $c$ is any constant $c \in \R$. The next thing we show is that zero is semi-simple. By Remark A.2.4 in \cite{lunardioptimal} it suffices to prove that $N(A^2) = N(A)$. To this end pick some $\rho \in N(A^2)$. Let $\rho_1 := A\rho$. Then $A\rho_1 = 0$ and hence $\rho_1 \in N(A) \cap R(A)$. Note that then necessarily $\rho_1$ is mean value free, $\int_0^l \rho_1 = 0$. 
Since $\rho_1$ also belongs to the kernel of $A$,
\begin{equation}  \tag{5.5} \begin{alignedat}{2} \label{304956fff85645}
\partial_\sigma \partial_\sigma \rho_1 + \kappa_*^2 \rho_1 &= c_1, \quad &\text{on } (0,l), \\
\partial_\sigma \rho_1 (0) &= - \omega_1 \rho_1(0), \quad\quad \\
\partial_\sigma \rho_1 (l) &= \omega_2 \rho_1(l),
\end{alignedat}
\end{equation}
for some constant $c_1$. Note that $c_1$ is determined by $\rho_1$. Since $\rho_1$ is mean value free, we can test $(5.5)_1$ with $\rho_1$ to the result
\begin{equation}
\int_0^l \partial_\sigma \partial_\sigma \rho_1 \rho_1 + \kappa_*^2 \int_0^l \rho_1 \rho_1 = c_1 \int_0^l \rho_1 = 0.
\end{equation}
An integration by parts then gives
\begin{equation}
- \int_0^l  |\partial_\sigma \rho_1|^2 + \kappa_*^2 \int_0^l | \rho_1|^2 + \omega_1 \rho_1(0)^2 + \omega_2 \rho_1(l)^2 = 0.
\end{equation}
Since $\rho_1$ is mean value free, we can use Poincaré-Wirtinger inequality to find some constant $c_0 = c_0(l) > 0$, such that
\begin{equation}
- c_0  | \rho_1|_{H^1}^2 + \kappa_*^2  | \rho_1|_{L^2}^2 + \omega_1 \rho_1(0)^2 + \omega_2 \rho_1(l)^2 \geq 0.
\end{equation}
In particular, if $\kappa_*^2$ is sufficiently small and $\omega_1, \omega_2$ are negative or positive but small, the second, third and fourth term may be absorbed by the first one and we obtain
\begin{equation}
- \tilde c_0  | \rho_1|_{H^1}^2  \geq 0,
\end{equation}
for some $\tilde c_0 > 0$. Hence $\rho_1  = 0$, which implies $A\rho = \rho_1 = 0$ and $\rho \in N(A)$. This shows zero is a semi-simple eigenvalue.

Let us now consider the general eigenvalue problem $\lambda \rho = -A\rho$ for some $\rho \in D(A)$, which reads as
 \begin{equation} \label{3fghdfgdfghfghh5} \tag{5.6}
\begin{alignedat}{2}
\lambda \rho &= - \ljump n_{\Sigma_*} \cdot \nabla \mu \rjump, \quad\quad & \text{on } \Sigma_*, \\
\mu|_{\Sigma_*} &= \partial_\sigma \partial_\sigma \rho + \kappa_*^2 \rho , \quad\quad\quad &\text{on } \Sigma_*, \\
\Delta \mu &= 0, &\text{in } \Omega \backslash \Sigma_*, \\
n_{\partial\Omega} \cdot \nabla \mu|_{\partial \Omega} &= 0, &\text{on } \partial\Omega \backslash \partial\Sigma_*, \\
\partial_\sigma \rho (0) &=  - \omega_1 \rho (0), \\
\partial_\sigma \rho (l) &=  \omega_2 \rho (l).
\end{alignedat}
\end{equation}
Testing $(5.6)_1$ with $\partial_\sigma \partial_\sigma \rho + \kappa_*^2 \rho$ in $L_2$ and invoking boundary and transmission conditions gives
\begin{equation} \label{3456jkh45j6hj5k4hrth} \tag{5.7}
\lambda \left[ | \partial_\sigma \rho |_{L^2}^2 - \kappa_*^2 | \rho |_{L^2}^2 - \omega_1 \rho (0)^2 - \omega_2 \rho (l)^2 \right] + | \nabla \mu |_{L^2}^2 = 0.
\end{equation}
If $\lambda \not= 0$, any eigenfunction $\rho$ to an eigenvalue $\lambda$ is necessarily mean value free, whence again Poincaré-Wirtinger inequality gives that $$ \left[ | \partial_\sigma \rho |_{L^2}^2 - \kappa_*^2 | \rho |_{L^2}^2 - \omega_1 \rho (0)^2 - \omega_2 \rho (l)^2 \right]  \geq 0,$$ provided $| \kappa_*| \in [0,\delta)$ and $\omega_1, \omega_2 \in (-\infty, \delta) $ for $\delta > 0$ sufficiently small. Equation (5.7) then gives that $\lambda$ is real and $\lambda \leq 0$. Hence (3) follows. The generalized principle of linearized stability of Pr\"uss, Simonett, and Zacher \cite{pruessmulsek} then gives the result.
 \end{proof}
 Let us show instability results for the evolution equation (5.4).
 
 \begin{theorem}
 Let $A,X_0,X_\gamma,X_1$ be as above in (5.3).
 \begin{enumerate}
 \item For fixed $l > 0$ and any small $\kappa_*$, there is some $K = K(l,\kappa_*) > 0$ such that if $\omega_1 = \omega_2 \geq K$, the trivial solution $\rho_* = 0$ is unstable in $X_\gamma$.
 \item For fixed $l > 0$ and any $\omega_1 = \omega_2$ small, there is some $K = K(l,\omega_1) > 0$ such that if $\kappa_*^2 \geq K$, the trivial solution $\rho_* = 0$ is unstable in $X_\gamma$.
 \item For any $\kappa_*$ and $\omega_1 = \omega_2$ small, there is some $K = K(\kappa_*, \omega_1) > 0$ such that if $l \geq K$, the trivial solution $\rho_* = 0$ is unstable in $X_\gamma$.
 \item In (2) and (3) the constant $K$ is not too large to violate the geometric condition between length and curvature of a circle (5.1), that means there are $(\kappa_*, l)$ fulfilling (2) or (3) which at the same time fulfil $|\kappa_*| l < 2 \pi$.
 \end{enumerate}
 In particular, in any of these cases, $\sigma(-A) \cap [ \zeta + i \mathbb R ] = \emptyset$ and $\sigma(-A) \cap \{ z \in \mathbb C : \operatorname{Re} z > \zeta \} \not = \emptyset$ for some $\zeta \in \R, \zeta \geq 0$.
 \end{theorem}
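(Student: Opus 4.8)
The plan is to transplant the instability mechanism from the proof of Theorem~\ref{34985jhjjj34h54j5645} to the curved setting, the only structural change being that the interface symbol $\partial_x\partial_x$ is replaced by $\tilde\Delta\rho := \partial_\sigma\partial_\sigma\rho + \kappa_*^2\rho$. For $\lambda > 0$ I would first rewrite the eigenvalue problem (5.6) as $\lambda\rho - D_{MS}\tilde\Delta\rho = 0$, observe that for mean value free $\rho$ the function $P_0\tilde\Delta\rho$ is a constant on $\Sigma_*$ (so that $D_{MS}P_0\tilde\Delta\rho = 0$), and apply the Neumann-to-Dirichlet operator $N_{MS}$ to reach the equivalent family $B_\lambda := \lambda N_{MS} - (I-P_0)\tilde\Delta$ on the Robin functions in $H^2_{2,(0)}(\Sigma_*)$, exactly as in the flat case. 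An integration by parts using the boundary conditions of (5.6) gives, for mean value free $\rho$, the identity
\begin{equation}
(B_\lambda\rho\,|\,\rho)_2 = \lambda\,(N_{MS}\rho\,|\,\rho)_2 + I_*(\rho,\rho), \qquad I_*(\rho,\rho) := |\partial_\sigma\rho|_2^2 - \kappa_*^2|\rho|_2^2 - \omega_1\rho(0)^2 - \omega_2\rho(l)^2 .
\end{equation}
Positivity of $N_{MS}$ on $L_{2,(0)}(\Sigma_*)$ together with the Young-type estimate of Lemma~\ref{2345hijou34h5u3h45ij34} shows, as before, that $B_\lambda$ is positive definite once $\lambda \geq \lambda_0$. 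The whole problem therefore reduces to exhibiting a single mean value free $\bar\rho$ in the domain with $I_*(\bar\rho,\bar\rho) < 0$: then $B_\lambda$ fails to be positive definite for small $\lambda > 0$, and the continuity-in-$\lambda$ argument of \cite{rtipaper}, \cite{wilkehabil} produces a genuine eigenvalue $\lambda > 0$ of $-A$.

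The decisive simplification over the flat case is that this test function may be taken to be an eigenfunction of the one-dimensional Robin Laplacian itself. For $\omega_1 = \omega_2 =: \omega$ I would pick the lowest eigenfunction $\rho_\omega$ of $-\partial_\sigma\partial_\sigma$ on $(0,l)$ under $\partial_\sigma\rho(0) = -\omega\rho(0)$, $\partial_\sigma\rho(l) = \omega\rho(l)$ that is antisymmetric about the midpoint $\sigma = l/2$; by antisymmetry $\rho_\omega$ is automatically mean value free and smooth, so it lies in the domain of $B_\lambda$ with no smoothing needed. Writing $\nu(\omega)$ for its eigenvalue, one integration by parts absorbs all boundary terms and yields the clean formula
\begin{equation}
I_*(\rho_\omega,\rho_\omega) = \bigl(\nu(\omega) - \kappa_*^2\bigr)\,|\rho_\omega|_2^2 ,
\end{equation}
so that the sign of $I_*$ is governed by the single inequality $\kappa_*^2 > \nu(\omega)$. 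At $\omega = 0$ one has $\rho_0(\sigma) = \cos(\pi\sigma/l)$ and $\nu(0) = (\pi/l)^2$; standard Sturm-Liouville perturbation theory gives that $\nu$ is continuous in $\omega$, with $\nu(\omega) \pfeil -\infty$ as $\omega \pfeil +\infty$, realised by the antisymmetric mode $\sinh(\beta(\sigma - l/2))$ subject to $\omega = \beta\coth(\beta l/2)$.

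All four assertions then collapse to the condition $\kappa_*^2 > \nu(\omega)$. For (1), fixing $l$ and small $\kappa_*$ and letting $\omega = \omega_+ \pfeil +\infty$, we have $\nu(\omega_+) \pfeil -\infty < \kappa_*^2$, so $I_*(\rho_{\omega_+},\rho_{\omega_+}) < 0$; alternatively the flat construction (4.15) works verbatim, the extra term $-\kappa_*^2|\bar g|_2^2 \leq 0$ only helping. For (2), fixing $l$ and small $\omega$, the threshold is $K(l,\omega) := \nu(\omega)$, finite by continuity. For (3), fixing $\kappa_*$ and small $\omega$, continuity gives $\nu(\omega)$ near $(\pi/l)^2 \pfeil 0$ as $l \pfeil \infty$, so $\nu(\omega) < \kappa_*^2$ once $l$ is large. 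For (4), the point is that for small $\omega$ the value $\nu(\omega)$ stays close to $(\pi/l)^2$, hence strictly below the geometric bound $(2\pi/l)^2$ forced by (5.1); thus the window $\nu(\omega) < \kappa_*^2 < (2\pi/l)^2$, roughly $\pi < |\kappa_*|\,l < 2\pi$, is nonempty and meets both the instability condition and $|\kappa_*|\,l < 2\pi$. Finally the spectral gap statement follows because the energy identity (5.7) forces $\sigma(-A) \subset \R$, the compact resolvent makes the spectrum discrete, and maximal regularity bounds it to the right; choosing $\zeta \in [0,\lambda)$ off this discrete set leaves $\zeta + i\R$ spectrum-free while keeping the positive eigenvalue in $\{\operatorname{Re} z > \zeta\}$, whence the generalized principle of linearized stability \cite{pruessmulsek} yields instability.

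The main obstacle I anticipate lies less in the three parameter limits, which reduce to the transparent inequality $\kappa_*^2 > \nu(\omega)$, than in two supporting points. First, the reduction to $B_\lambda$ and the passage from a sign-indefinite $B_\lambda$ to an actual positive eigenvalue rests on the spectral properties of the nonlocal operator $N_{MS}$ and on a continuity argument in $\lambda$; this is where the Dirichlet-to-Neumann structure makes the conclusion genuinely more delicate than a pure ODE eigenvalue count, and I would import it carefully from \cite{rtipaper}, \cite{wilkehabil}, \cite{pruessbuch}. Second, for part (4) one must verify quantitatively that the perturbed Robin eigenvalue $\nu(\omega)$ remains strictly below $(2\pi/l)^2$ throughout the admissible range of $\omega$, so that (5.1) is respected; although this is merely a continuity statement, it has to be made explicit to guarantee that genuinely admissible circular arcs become unstable.
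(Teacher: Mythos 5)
Your proposal is correct, and its skeleton coincides with the paper's proof: the same reduction of the eigenvalue problem (5.6) to the pencil $B_\lambda = \lambda N_{MS} - (I-P_0)S$, the same use of the positivity of $N_{MS}$ and of the Young-type Lemma 4.3 to get positive definiteness for $\lambda \geq \lambda_0$, and the same appeals to \cite{rtipaper}, \cite{wilkehabil} for turning sign-indefiniteness at small $\lambda$ into a genuine positive eigenvalue and to \cite{pruessmulsek} for instability. Where you genuinely diverge is the heart of the argument, the negative direction for $I_*$. The paper recycles the explicit piecewise-linear prototype (4.15), computes its energy by hand, and lets $\varepsilon_1 \to 0$ to obtain the threshold expression $\tfrac{2}{l} - \omega_1 - \kappa_*^2 \tfrac{l}{6}$, whose sign it then examines in the three regimes; assertion (4) is handled by evaluating this expression exactly at the geometric boundary $|\kappa_*| l \to 2\pi$, where it equals $\tfrac{2}{l}\bigl(1-\tfrac{\pi^2}{3}\bigr) - \omega_1 < 0$, resp.\ $|\kappa_*|\bigl(\tfrac{1}{\pi} - \tfrac{\pi}{3}\bigr) - \omega_1 < 0$; finally the piecewise-linear function must be approximated by a smoother one in $D(B_\lambda)$. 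You instead take the lowest antisymmetric eigenfunction of the one-dimensional Robin Laplacian, which collapses the sign question to $\kappa_*^2 > \nu(\omega)$. This buys you three things: the test function is smooth and lies in $D(B_\lambda)$ outright, so the approximation step disappears; the criterion is sharp within antisymmetric variations and improves the paper's constant (at $\omega = 0$ you get instability for $|\kappa_*| l > \pi$, the paper's computation only for $|\kappa_*| l > \sqrt{12}$); and the degeneracy $\omega = 2/l$, where $\nu$ crosses zero, matches transparently the singularity $2 - l\omega = 0$ of the kernel computation in Theorem 4.2. The price is importing Sturm--Liouville facts (existence, continuity, monotonicity of $\nu(\omega)$) that the paper's bare-hands computation avoids. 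Two small repairs to your write-up: in (2), $K$ must be taken strictly above $\nu(\omega)$, since $\kappa_*^2 = \nu(\omega)$ only yields $I_* = 0$; and in (3) your claim that $\nu(\omega)$ stays near $(\pi/l)^2$ as $l \to \infty$ fails for fixed $\omega > 0$, where the attractive Robin condition creates a boundary mode and $\nu_l(\omega) \to -\omega^2$ --- but this error is harmless, indeed favourable, since then $\nu_l(\omega) < 0 < \kappa_*^2$ a fortiori, while for $\omega \leq 0$ one has $\nu_l(\omega) < (2\pi/l)^2 \to 0$, so the conclusion stands in all cases.
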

 \begin{proof}
 By the compact embedding $D(A) \into \into X_0$ we know that $A$ has a compact resolvent. Hence the spectrum of $A$ is isolated, consists only of eigenvalues and each eigenvalue has finite multiplicity. Furthermore, any eigenvalue $\lambda$ is real and satisfies
 \begin{equation} \label{3456jkh4dfgdfgdfgdfgdfg5j6hj5k4hrth}
\lambda \left[ | \partial_\sigma \rho |_{L^2}^2 - \kappa_*^2 | \rho |_{L^2}^2 - \omega_1 \rho (0)^2 - \omega_2 \rho (l)^2 \right] + | \nabla \mu |_{L^2}^2 = 0,
\end{equation}
cf. (5.7), where $\mu = T_0(\partial_\sigma \partial_\sigma \rho + \kappa_*^2 \rho)$ and $\rho$ is a corresponding eigenfunction to $\lambda$.

We now follow the lines of the proof of (2) in Theorem 4.2. For $\lambda > 0$ we can rewrite the eigenvalue problem $\lambda \rho = A \rho$ as
\begin{equation} \label{3405634596nj456} \tag{5.8}
\lambda \rho - D_{MS} (I - P_0) S \rho = 0,
\end{equation}
where $D_{MS}$ is as before the corresponding Dirichlet-to-Neumann operator with inverse $N_{MS} = [D_{MS}]^{-1}$, $P_0 f$ the mean value of $f$, and $S\rho := \partial_\sigma \partial_\sigma \rho + \kappa_*^2 \rho$ with domain $D(S) := D(A)$. We can then extend the operators in a natural way and rewrite (5.8) as
\begin{equation} \label{34056345f96nj456}
\lambda N_{MS} \rho -  (I - P_0) S \rho = 0.
\end{equation}
Define $B_\lambda := \lambda N_{MS} - (I-P_0) S$ with natural domain $$D(B_\lambda) := H^2_{2} (0,l) \cap \{ \rho : \int_0^l \rho = 0, \; \partial_\sigma \rho (0) = - \omega_1 \rho (0), \; \partial_\sigma \rho (l) = - \omega_1 \rho (l) \}.$$
Let us show that there is some $\lambda_0 > 0$ such that $B_\lambda$ is positive definite on $L_{2,(0)}$ for all $\lambda \geq \lambda_0$. Since $N_{MS}$ is positive definite on $L_{2,(0)}$, cf. \cite{rtipaper}, and $\rho$ is mean value free,
\begin{align}
(B_\lambda \rho | \rho)_2 &= \lambda (N_{MS} \rho | \rho)_2 - ( (I-P_0) (\partial_\sigma \partial_\sigma \rho + \kappa_*^2 \rho ) | \rho)_2 \\
&= \lambda (N_{MS} \rho | \rho)_2 - ( \partial_\sigma \partial_\sigma \rho + \kappa_*^2 \rho  | \rho)_2 \\
&\geq \lambda d_0 | \rho |_2^2 + | \partial_\sigma \rho |_2^2 - \omega_1 \rho(0)^2 - \omega_2 \rho (l)^2 - \kappa_*^2 | \rho |_2^2,
\end{align}
for some $d_0 > 0$, for any $\rho \in D(B_\lambda)$. Lemma 4.3 then gives that for any $\omega_1, \omega_2,$ and $ \kappa_*$, the last three terms may be absorbed if $\lambda \geq \lambda_0$ for some $\lambda_0 =  \lambda_0 (\omega_1,\omega_2, \kappa_*) > 0$. Hence $B_\lambda$ is positive definite on $L_{2,(0)}$ for all $\lambda \geq \lambda_0$.

It remains to construct a function $\bar \rho \in D(B_\lambda)$, such that $(B_\lambda \bar \rho | \bar \rho )_2 < 0$, if $\lambda > 0$ is sufficiently small. Since $\lambda (N_{MS} \rho | \rho)_2 \pfeil 0$ as $\lambda \pfeil 0$, it is enough to find some $\bar \rho \in D(B_\lambda)$ such that
\begin{equation}  \label{34598067459086450896gjfkhgjfk} \tag{5.9}
 | \partial_\sigma \bar \rho |_2^2 - \omega_1 \bar \rho(0)^2 - \omega_2 \bar \rho (l)^2 - \kappa_*^2 | \bar \rho |_2^2 <0.
 \end{equation}
 We now want to find such $\bar \rho$ in all three cases stated in the theorem. To this end we start again with the prototype introduced in (4.15). Let $\varepsilon_1 > 0$ small. Define
 $\bar g : [0,l] \pfeil \R, \sigma \mapsto \bar g(\sigma)$ by means of
\begin{equation} 
\bar g(\sigma) := \begin{cases}
1- \omega_1 \sigma, & \sigma \in [0, \varepsilon_1], \\
1- \omega_1 \varepsilon_1 - (1-\omega_1\varepsilon_1)(\sigma-\varepsilon_1)/(l/2-\varepsilon_1), & \sigma \in [\varepsilon_1, l/2], \\
-g(\sigma-l), & \sigma \in [l/2, l].
\end{cases}
\end{equation}
Note that $\bar g(0) = 1$, $\bar g(l) = -1$, $\bar g$ fulfils the boundary conditions, is mean value free, and piecewise smooth and continuous, hence in $H^1_2 (0,l)$.
We then explicitly calculate
\begin{align*}
& | \partial_\sigma \bar g |_{L_2(0,l/2)}^2 - \omega_1 \bar g(0)^2 - \kappa_*^2 | \bar g |_{L_2(0,l/2)}^2  = \\
&= \varepsilon_1 \omega_1^2 + \frac{ (1- \omega_1 \varepsilon_1)^2}{l/2 - \varepsilon_1} - \omega_1 - \kappa_*^2 \left[ \frac{\varepsilon_1}{3} ( 3 - 3 \omega_1 \varepsilon_1 + \omega_1^2 \varepsilon_1 ) + \frac{1}{6} (1- \omega_1 \varepsilon_1)^2 (l - 2 \varepsilon_1) \right].
\end{align*}
Note that by symmetry it suffices to calculate the expressions on $(0,l/2)$.
We now let formally $\varepsilon_1 \pfeil 0$. The expression on the right hand side then converges to
\begin{equation} \label{3489053485} \tag{5.10}
\frac{2}{l} - \omega_1 - \kappa_*^2 \frac{l}{6}.
\end{equation}
We now distinguish the three cases in the theorem.
\begin{enumerate}
\item Here we fix $l > 0$ and $\kappa_*$. It is clear that there is some $K>0$ such that the expression in (5.10) gets strictly negative if $\omega_1 \geq K$. It even holds that $2/l - \omega_1 - \kappa_*^2 l/6 \pfeil -\infty$ if $\omega_1 \pfeil \infty$.
\item Here we fix $l > 0$ and $|\omega_1|$ small. Note that in this case there is a geometric condition, $|\kappa_*| l < 2 \pi$, so we can not choose $|\kappa_*|$ arbitrarily large. However, taking the limit as $|\kappa_*| \pfeil 2\pi/l$ of (5.10), we obtain
\begin{equation} \label{348hjkfghjk85}
 \lim_{ |\kappa_*| \pfeil 2 \pi/l} \left[ \frac{2}{l} - \omega_1 - \kappa_*^2 \frac{l}{6} \right] = \frac{2}{l} \left[ 1 - \frac{\pi^2}{3} \right] - \omega_1 < 0,
\end{equation}
provided $|\omega_1|$ is small enough.
\item In this case we fix $\kappa_*$ and $|\omega_1|$ small. Again we have to fulfil the relation $|\kappa_*| l < 2 \pi$. Taking limits $l \pfeil 2\pi/|\kappa_*|$,
\begin{equation} \label{348hjkfghjk85}
 \lim_{l \pfeil 2 \pi/|\kappa_*|} \left[ \frac{2}{l} - \omega_1 - \kappa_*^2 \frac{l}{6} \right] = |\kappa_*| \left[ \frac{1}{\pi} - \frac{\pi}{3} \right] - \omega_1 < 0,
\end{equation}
provided again $|\omega_1|$ is small.
\end{enumerate}
This way we now obtain the following result in all three cases: By choosing $\varepsilon_1 > 0$ very small, we can construct $\bar g$ as above such that the strict inequality (5.9) holds true. Since $\bar g$ is only $H^1_2$ and not $H^2_2$ we need to approximate $\bar g$ by a more regular function $\bar \rho$, which then belongs to the domain $D(B_\lambda)$ and also fulfils the strict inequality (5.9). This then shows that $(B_\lambda \bar \rho | \bar \rho )_2 < 0$ for some $\bar \rho \in D(B_\lambda)$ if $\lambda > 0$ is sufficiently small, whence $B_\lambda$ is not positive definite for this small $\lambda > 0$. Following the arguments of \cite{wilkehabil} we then obtain the existence of a positive eigenvalue. 

The fact that then $\rho_* = 0$ is unstable in $X_\gamma$ follows from the generalized principle of linearized stability, cf. \cite{pruessbuch}.
 \end{proof}
 
\section{summary on linearized stability and instability}
In this section we shall summarize the results on linearized stability.
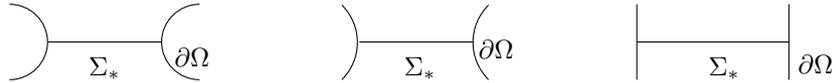
\begin{figure}[ht]
\begin{tikzpicture}
\draw (-1.6,0) arc (270:90:0.5cm);
\draw (-4.1,0) arc (-90:90:0.5cm);
\draw (-3.6,0.5) -- (-2.1,0.5);

\draw (2.2,0) arc (225:135:0.7cm);
\draw (0.27,0) arc (-45:45:0.7cm);
\draw (0.5,0.5) -- (2,0.5);

\draw (4.15,0) -- (4.15,1);
\draw (6.15,0) -- (6.15,1);
\draw (4.15,0.5) -- (6.15,0.5);

\draw (5.3,0.15) node {$\Sigma_\ast$};
\draw (1.3,0.15) node {$\Sigma_\ast$};
\draw (-2.85,0.15) node {$\Sigma_\ast$};
\draw (2.3,0.4) node {$\partial\Omega$};
\draw (-1.7,0.3) node {$\partial\Omega$};
\draw (6.5,0.2) node {$\partial\Omega$};
\end{tikzpicture}
\caption{$\kappa_* = 0$. Exponential stability for all $\omega_1, \omega_2 \leq 0$ regardless of $L > 0$.}
\label{figucxycvgg}
\end{figure}
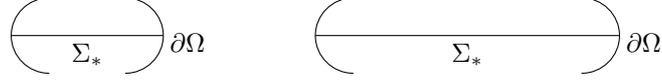
\begin{figure}[ht]
\begin{tikzpicture}

\draw (8.5,0) arc (270:90:0.5cm);
\draw (9.5,0) arc (-90:90:0.5cm);
\draw (8,0.5) -- (10,0.5) ;

\draw (12.5,0) arc (270:90:0.5cm);
\draw (15.5,0) arc (-90:90:0.5cm);
\draw (12,0.5) -- (16,0.5) ;

\draw (9,0.25) node {$\Sigma_\ast$};
\draw (10.32,0.4) node {$\partial\Omega$};
\draw (14,0.25) node {$\Sigma_\ast$};
\draw (16.32,0.4) node {$\partial\Omega$};
\end{tikzpicture}
\caption{$\kappa_* = 0$ and fixed $\omega_1 = \omega_2 > 0$. Exponential stability for small $L>0$, instability for large $L>0$.}
\label{figucxcvxcg}
\end{figure}
\begin{figure}[ht]
\begin{tikzpicture}

\draw (0.1,0) arc (200:160:1.4cm);
\draw (1.9,0) arc (-20:20:1.4cm);
\draw (0,0.5) -- (2,0.5) ;

\draw (4.5,0) arc (270:90:0.5cm);
\draw (5.5,0) arc (-90:90:0.5cm);
\draw (4,0.5) -- (6,0.5) ;

\draw (1,0.25) node {$\Sigma_\ast$};
\draw (2.32,0.4) node {$\partial\Omega$};
\draw (5,0.25) node {$\Sigma_\ast$};
\draw (6.32,0.4) node {$\partial\Omega$};
\end{tikzpicture}
\caption{$\kappa_* = 0$ and fixed $L>0$. Exponential stability for small $\omega_1 = \omega_2 > 0$, instability for large $\omega_1 = \omega_2$.}
\label{figcvvvccvcg}
\end{figure}
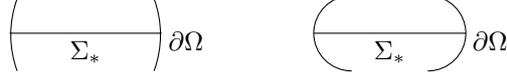

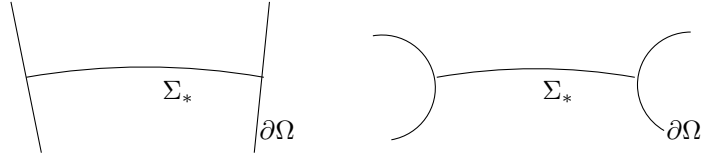
\begin{figure}[ht]
\begin{tikzpicture}

\draw (0,0) arc (100:80:9cm);
\draw (2,-0.2) node {$\Sigma_\ast$};
\draw (3.3,-0.7) node {$\partial\Omega$};
\draw (0.2,-1) -- (-0.2,1);
\draw (3,-1) -- (3.2,1);

\draw (5.4,0) arc (100:80:7.5cm);
\draw (7,-0.2) node {$\Sigma_\ast$};
\draw (8.65,-0.7) node {$\partial\Omega$};
\draw (8.74,0.6) arc (90:240:0.7cm);
\draw (4.56,0.55) arc (100:-80:0.7cm);

\end{tikzpicture}
\caption{Fixed $l >0$ and small $\kappa_* \not=0 $. Exponential stability for $\omega_1, \omega_2 \leq 0$.}
\label{figuvcvcbcvcvg}
\end{figure}
\begin{figure}[ht]
\begin{tikzpicture}

\draw (0,0) arc (100:80:9cm);
\draw (2,-0.2) node {$\Sigma_\ast$};
\draw (3.3,-0.7) node {$\partial\Omega$};
\draw (0,0.6) arc (175:215:2cm);
\draw (3.1,0.6) arc (12:-28:2cm);

\draw (5,0) arc (100:80:9cm);
\draw (7,-0.2) node {$\Sigma_\ast$};
\draw (8.3,-0.5) node {$\partial\Omega$};
\draw (5.4,0.7) arc (110:270:0.7cm);
\draw (7.75,0.7) arc (65:-85:0.7cm);

\end{tikzpicture}
\caption{Fixed $l>0$ and small $\kappa_* \not=0$. Exponential stability for $\omega_1, \omega_2 >0$ small. Instability for $\omega_1,\omega_2 >0$ large.}
\label{figcvbfgbfgbg}
\end{figure}
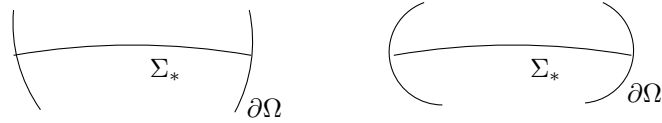
\begin{figure}[ht]
\begin{tikzpicture}

\draw (0,0) arc (100:80:9cm);
\draw (2,-0.2) node {$\Sigma_\ast$};
\draw (3.3,-0.7) node {$\partial\Omega$};
\draw (0.2,-1) -- (-0.2,1);
\draw (3,-1) -- (3.2,1);

\draw (5,0) arc (140:40:1.5cm);
\draw (6.3,0.15) node {$\Sigma_\ast$};
\draw (7.3,-0.4) node {$\partial\Omega$};
\draw (5.5,-0.5) -- (4.55,0.4);
\draw (6.9,-0.4) -- (7.7,0.4);

\end{tikzpicture}
\caption{Fixed $l >0$ and $\omega_1, \omega_2 = 0$. Exponential stability for $\kappa_*^2$ small. Instability for $\kappa_*^2$ large.}
\label{figtbcvbfbgg}
\end{figure}

\begin{figure}[!]
\begin{tikzpicture}

\draw (0,0) arc (160:20:1cm);
\draw (1.3,0.85) node {$\Sigma_\ast$};
\draw (2,-0.25) node {$\partial\Omega$};
\draw (-0.5,0.15) -- (0.5,-0.15);
\draw (1.5,-0.15) -- (2.5,0.15);

\draw (5,0) arc (250:-70:1cm);
\draw (6,1.4) node {$\Sigma_\ast$};
\draw (5.2,-0.4) node {$\partial\Omega$};
\draw (4.8,-0.5) -- (5.2,0.4);
\draw (5.9,-0.4) -- (5.5,0.4);

\end{tikzpicture}
\caption{Fixed $\kappa_* \not= 0$ and $\omega_1, \omega_2 = 0$. Exponential stability for $l > 0$ small. Instability for $l>0$ large.}
\label{figcvbcvbcvbg}
\end{figure}
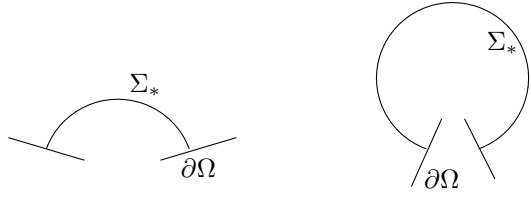
\section{Nonlinear stability}
In this section we 
show a first nonlinear stability result for the case of a flat stationary solution. We
are concerned with the full transformed nonlinear problem (2.1) for the height function, which reads as
\begin{equation}  \tag{7.1} \label{378645087645087405}
\begin{alignedat}{2}
 \partial_t h &= - a(h)\ljump n_{\Gamma_h} \cdot \nabla_h \eta \rjump , \quad\quad &&\text{on } \Sigma, \\
\eta|_{\Sigma} &= K(h), && \text{on } \Sigma, \\
\Delta_h \eta &= 0, && \text{in } \Omega \backslash \Sigma, \\
n_{\partial\Omega}^h \cdot \nabla_h \eta|_{\partial\Omega} &= 0, && \text{on } \partial\Omega, \\
n_{\partial\Omega}^h \cdot n_{\Gamma_h} &= 0, && \text{on } \partial\Sigma, \\
h|_{t=0} &= h_0, && \text{on } \Sigma,
\end{alignedat}
\end{equation}
where
$ a (h) (t) = a (h(t))$, $ a ( h(t))$ depends smoothly on $h(t)$, and $a(0)=1$. For a complete deduction of (7.1) we refer to \cite{mulsekpaper12}.

We want to rewrite the full nonlinear system (7.1) as an abstract evolution equation in an $L_p$-setting. By the generalized principle of linearized stability \cite{pruessmulsek} we then deduce nonlinear stability and convergence to an equilibrium solution at exponential rate.

First we need to analyze the boundary condition $(7.1)_5$. Since we work in two space dimensions, the condition $(7.1)_5$ can be rewritten as
\begin{equation}
n_{\partial \Omega} ( \tilde \Theta (h(t)) \cdot R \tau_h = 0,
\end{equation}
where $\tilde \Theta ( h(t))$ is defined by means of $\tilde \Theta (h(t)) (x) := \Theta_h^t (x)$, $R$ is the rotation of $90$ degrees counterclockwise, and $\tau_h$ is the tangent vector to the graph of $h$. In particular,
\begin{equation}
\tau_h (p) = \partial_p X (p, h(p)) + \partial_w X (p, h(p)) \partial_p h(p), \quad p \in \Sigma.
\end{equation}
Clearly, $R\tau_h$ is parallel to the unit normal $n_{\Gamma_h}$. Then $(7.1)_5$ is equivalent to
\begin{equation}
n_{\partial \Omega} ( \tilde\Theta (h(t)) \cdot R \partial_p X (h(t)) + n_{\partial \Omega} ( \tilde\Theta (h(t)) \cdot R \partial_w X (h(t)) \partial_p h (t)= 0,
\end{equation}
where we surpress the dependence of $p \in \Sigma$ in the notation. To economize notation, define
\begin{equation}
G(h(t)) := n_{\partial \Omega} ( \tilde\Theta (h(t)) \cdot R \partial_p X (h(t)), \quad H (h(t)) := n_{\partial \Omega} ( \tilde\Theta (h(t)) \cdot R \partial_w X (h(t)).
\end{equation}
Note that by properties of the curvilinear coordinate system $X$,
\begin{equation}
G,H \in C^\infty(X_j ; \R), \; j \in \{ 0,\gamma,1 \}, \quad G (0) = 0, \; G' (0) = 0, \;|H(0)| = 1, \; \text{on } \partial \Sigma.
\end{equation}
Since we are interested in stability properties around $h = 0$, we can assume that $|h|_{X_\gamma} \leq \delta_1$ for some uniform $\delta_1 > 0$ such that $|H(h)| \geq 1/2$ for all $|h|_{X_\gamma} \leq \delta_1$. Therefore we can define
\begin{equation}
\mathcal G (h) := - \frac{ G (h)}{ H(h)}, \quad h \in X_\gamma, |h|_{X_\gamma} \leq \delta_1.
\end{equation}
Then $\mathcal G \in C^\infty(X_j; \R), j \in \{0,\gamma \}$, $\mathcal G$ is also quadratic in zero, and the semilinear boundary condition 
\begin{equation}
\partial_p h(t) = \mathcal G (h(t)), \quad \text{on } \partial \Sigma,
\end{equation}
is equivalent to $(7.1)_5$, for $|h|_{X_\gamma} \leq \delta_1$.

Let us introduce notation. 
Define the operator $\mathcal C$ by means of $$\mathcal C (h)[g] := - a(h)  \ljump n_{\Gamma_h} \cdot  \nabla_h g \rjump  .$$
We remark that in particular $\mathcal C(0)[g] = - \ljump n_\Sigma \cdot \nabla g \rjump$.
Denote by $T^f g$ the unique solution of the two-phase elliptic problem
\begin{align}
\eta|_{\Sigma} &= g, && \text{on } \Sigma, \\
\Delta_f \eta &= 0, && \text{in } \Omega \backslash \Sigma, \\
n_{\partial\Omega}^f \cdot \nabla_f \eta|_{\partial\Omega} &= 0, && \text{on } \partial\Omega, 
\end{align}
cf. Appendix A in \cite{mulsekpaper12}.
Furthermore, define the boundary operator $\mathcal B$ by means of $\mathcal B g := \partial_p g$. Then (7.1) can be rewritten as
\begin{equation} \tag{7.2} \label{rgjhjrkehgf87405}
\begin{alignedat}{2}
\partial_t h (t)&= - \mathcal C (h(t)) [ T^{h(t)} K(h(t))] , \quad\quad\quad &&\text{on } \Sigma, t > 0, \\
\mathcal B h(t) &= \mathcal G (h(t)), &&\text{on } \partial\Sigma, t > 0, \\
h (0) &= h_0, && \text{on } \Sigma, t = 0.
\end{alignedat}
\end{equation}
Let $\mathcal O$ be a sufficiently small neighbourhood of zero in $X_\gamma$. Then
we can decompose 
\begin{equation}
K(h) = P(h)h + Q(h), \quad h \in \mathcal O \cap X_1,
\end{equation}
where $P \in C^1 (\mathcal O ; \mathcal B(X_1 ; W^{2-1/q}_q(\Sigma)))$, $Q \in C^1( \mathcal O ; W^{2-1/q}_q(\Sigma))$, cf. \cite{mulsekpaper12}, \cite{eschersimonett}. Furthermore, since $\Sigma$ is flat, $P(0) = \Delta_\Sigma$, the derivative of $K$ at point zero is given by $[h \mapsto \Delta_\Sigma h]$, cf. Lemma 6.1 in \cite{mulsekpaper12}, and $Q$ is quadratic in zero.

Define now the operator $A$ by means of $A(h) := \mathcal C(h)T^h P(h)$. Then (7.2) is equivalent to the evolutionary problem
\begin{equation} \tag{7.3} \label{39049538475384534}
\begin{alignedat}{2}
\partial_t h (t) + A (h(t)) h(t) &= F(h(t)) , \quad\quad\quad &&\text{on } \Sigma, t > 0, \\
\mathcal B h(t) &= \mathcal G (h(t)), &&\text{on } \partial\Sigma, t > 0, \\
h (0) &= h_0, && \text{on } \Sigma, t = 0,
\end{alignedat}
\end{equation}
where $F(h) := -\mathcal C(h)T^h Q(h)$. In particular, let us note that
\begin{equation} \label{34095734875390485} \tag{7.4}
A \in C^1(\mathcal O; \mathcal B(X_1 ;X_0)), \quad F \in C^1(\mathcal O ; X_0 ),
\end{equation}
cf. \cite{mulsekpaper12}, \cite{eschersimonett}.
We now want to apply the generalized principle of linearized stability \cite{pruessmulsek} to problem (7.3). To this end we need to rewrite (7.3) as a single local-in-time evolution equation for $h$, cf. \cite{apace}. 

Recall that the trace space is given by $X_\gamma = B^{4-1/q-3/p}_{qp} (\Sigma)$.
Let $E$ be the corresponding extension operator to $\mathcal B$ in space, satisfying
\begin{equation}
Eg \in X_\gamma, \quad \mathcal B Eg = g, \; \text{on } \partial \Sigma, \quad Eg = 0, \; \text{on } \partial \Sigma.
\end{equation}
Recall that in two space dimensions, $\partial \Sigma$ only consists of two isolated points. Having this operator at hand, we define $\tilde h(t) := E \mathcal G (h(t))$, which is a function in $C^0([0,T];X_\gamma)$. Set $v :=  h- \tilde h$.
Then we can rewrite (7.3) as
\begin{equation} \label{3904fghjfgh9538475384534}
\begin{alignedat}{2}
\partial_t v (t) + A (v(t) + \tilde h(t)) v(t) &= F(v(t)+\tilde h(t)) - \partial_t \tilde h(t) \\ &\quad- A( v(t) + \tilde h (t)) \tilde h(t) , \quad\quad\quad &&\text{on } \Sigma, t > 0, \\
\mathcal B v(t) &=0, &&\text{on } \partial\Sigma, t > 0, \\
v (0) &= v_0 := h_0 - \tilde h(0), && \text{on } \Sigma, t = 0.
\end{alignedat}
\end{equation}
Now,
\begin{equation}
\tilde h (t) =  E \mathcal G (h(t)) = E \mathcal G (v(t) + \tilde h(t) ) =E \mathcal G (v(t) ),
\end{equation}
since $\tilde h$ vanishes on the boundary $\partial \Sigma$. Therefore (7.3) can be recast as
\begin{equation} \label{390495384753845dfgdfgdfg34} \tag{7.5}
\begin{alignedat}{2}
\partial_t v (t) + \tilde A (v(t))[ v(t) ]&= \tilde F(v(t)) , \quad\quad\quad &&\text{on } \Sigma, t > 0, \\
\mathcal B [v(t)] &=0, &&\text{on } \partial\Sigma, t > 0, \\
v (0) &= v_0, && \text{on } \Sigma, t = 0,
\end{alignedat}
\end{equation}
where
\begin{align}
\tilde A (v) v &:= A (v + E\mathcal G (v) ) v, \\
\tilde F(v) &:=  F(v + E\mathcal G (v) ) - A(v + E\mathcal G (v))[ E\mathcal G(v)] \\ &\quad\quad\quad - E( \mathcal G' ( v + E\mathcal G (v) )  [ F(v) - A(v)v ] ),\\
\text{and } v_0 &:= h_0 - E \mathcal G (h_0).
\end{align}
It is then clear that the linear boundary condition $(7.5)_2$ can be absorbed into the domain of the operator. Indeed, define $\tilde A_{\mathcal B} \in C^1 (\mathcal O ; \mathcal B(X_1; X_0))$ by means of $ \tilde A_{\mathcal B} (f) [g] :=  \tilde A(f) [g]$, where the domain of the linear operator $\tilde A_{\mathcal B} (f)$ is given by
\begin{equation}
D(\tilde A_{\mathcal B} (f) ) := X_1 \cap \{ g \in X_1 : \mathcal Bg = 0 \}.
\end{equation}
Then (7.3) is finally equivalent to the abstract evolution problem
\begin{equation} \label{34987345h3kj4h534534543A} \tag{7.6}
\partial_t v(t) +  \tilde A_{\mathcal B} (v(t))[v(t)] = \tilde F(v(t)), \; t >0, \quad v(0) = v_0,
\end{equation}
whenever $v(t) \in \mathcal O$ for $t > 0$.

We shall now analyse the transform in dependent variables $[h \mapsto v]$, given by 
\begin{equation} \label{27834637dsfdsf4634} \tag{7.7}
v(h) := (I - E\mathcal G) (h) = (I + E \mathcal G)^{-1}(h).
\end{equation}
\begin{lemma} \label{8943507634537456893}
There is a small neighbourhood $\mathcal O^\prime$ around zero in $X_\gamma$, such that $[h \mapsto v]$ is bijective. This bijection maps zero to zero and is also a local isomorphism around zero between the equilibrium sets of the equations for $h$ and $v$, corresponding to the equations (7.3) and (7.6), respectively.
\end{lemma}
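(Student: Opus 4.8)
The plan is to exhibit $[h \mapsto v]$ as a near-identity map, obtain bijectivity from the inverse function theorem (while identifying the inverse with $I + E\mathcal G$ as claimed in (7.7)), and finally transport equilibria along the resulting diffeomorphism. Write $\Phi := I - E\mathcal G$, so that $v = \Phi(h)$. Since $E$ is a bounded linear operator and $\mathcal G \in C^\infty(X_\gamma;\R)$ is quadratic in zero, the composition $E\mathcal G$ is a $C^1$ self-map of a neighbourhood of zero in $X_\gamma$ with $(E\mathcal G)(0)=0$ and $(E\mathcal G)'(0) = E\,\mathcal G'(0) = 0$ (here $\mathcal G'(0)=0$ follows from $G(0)=G'(0)=0$, $|H(0)|=1$ via the quotient $\mathcal G=-G/H$). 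Consequently $\Phi \in C^1$ near zero, $\Phi(0)=0$, and $\Phi'(0) = I$ is a topological isomorphism of $X_\gamma$.

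First I would apply the inverse function theorem in the Banach space $X_\gamma$: since $\Phi'(0)$ is invertible, there is an open neighbourhood $\mathcal O'$ of zero on which $\Phi$ is a $C^1$-diffeomorphism onto an open neighbourhood of $\Phi(0)=0$. In particular $[h \mapsto v]$ is bijective on $\mathcal O'$ and maps zero to zero. To identify the inverse with $I + E\mathcal G$, I would invoke the structural fact that $\mathcal G(f)$ depends only on the trace $f|_{\partial\Sigma}$, since its definition involves $G$ and $H$ evaluated solely at the two boundary points, together with the property $Eg = 0$ on $\partial\Sigma$. For any admissible $w$ this yields $\mathcal G(w + E\mathcal G(w)) = \mathcal G(w)$, whence $\Phi\big(w + E\mathcal G(w)\big) = w + E\mathcal G(w) - E\mathcal G(w) = w$ and, symmetrically, $(I+E\mathcal G)\big(\Phi(h)\big) = h$. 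Shrinking $\mathcal O'$ so that both $\Phi$ and $I+E\mathcal G$ map it into the ball $\{|f|_{X_\gamma}\le \delta_1\}$ on which $\mathcal G$, $G$, $H$ are defined makes these identities rigorous and recovers $\Phi^{-1} = I + E\mathcal G$, in agreement with (7.7).

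Finally I would transport equilibria. The change of dependent variable $v = \Phi(h)$ is autonomous and was constructed precisely so that (7.3) is equivalent to (7.6); hence $h(\cdot)$ solves (7.3) if and only if $v(\cdot) = \Phi(h(\cdot))$ solves (7.6) in the corresponding regularity class, as $\Phi$ and $\Phi^{-1}$ preserve the scales $X_\gamma$ and $X_1$. At the level of the stationary equations one checks that the boundary condition $\mathcal B h_* = \mathcal G(h_*)$ transforms into $\mathcal B v_* = \mathcal B h_* - \mathcal B E\mathcal G(h_*) = \mathcal G(h_*) - \mathcal G(h_*) = 0$, i.e. $v_* \in D(\tilde A_{\mathcal B}(v_*))$, while the bulk equation $A(h_*)h_* = F(h_*)$ becomes $\tilde A_{\mathcal B}(v_*)[v_*] = \tilde F(v_*)$. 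Equivalently, equilibria are exactly the constant-in-time solutions, and an autonomous transform maps constants to constants; therefore $\Phi$ restricts to a bijection of the equilibrium set of (7.3) near zero onto that of (7.6). Being the restriction of the diffeomorphism $\Phi$, this restriction is an isomorphism of the two equilibrium sets, and $\Phi(0)=0$ together with the fact that $h_*=0$ is an equilibrium of both systems shows that zero is mapped to zero.

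The genuinely delicate point, and the step I expect to be the main obstacle, is the identification of the explicit inverse: without the trace-only dependence of $\mathcal G$ and the boundary-vanishing of $E$, the inverse function theorem would still deliver bijectivity and a smooth inverse, but not the clean algebraic formula $(I+E\mathcal G)^{-1}$ nor the transparent equilibrium correspondence. The accompanying bookkeeping, namely choosing $\mathcal O'$ consistently so that every composition stays inside the domain $\{|f|_{X_\gamma}\le\delta_1\}$ where $\mathcal G$, $G$, $H$ are defined, is routine but must be carried out to make the compositional identities meaningful.
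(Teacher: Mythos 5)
Your treatment of the bijectivity part is correct and in fact more detailed than the paper's: the paper simply invokes (7.7), whereas you verify the explicit two-sided inverse $(I+E\mathcal G)$ from the two structural facts that $\mathcal G$ depends only on boundary traces and that $Eg=0$ on $\partial\Sigma$, so that $\mathcal G(w+E\mathcal G(w))=\mathcal G(w)$. That observation is exactly right, and it (not the inverse function theorem) is what makes (7.7) literally true.

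The gap is in the equilibrium correspondence, in the direction you did not compute: from an equilibrium $v_*$ of (7.6) back to an equilibrium of (7.3). You outsource this to ``(7.3) is equivalent to (7.6), and an autonomous transform maps constant solutions to constant solutions.'' But the equivalence in that direction is not available for free: (7.6) was obtained from (7.3) by substituting $\partial_t\tilde h = E\mathcal G'(h)[\partial_t h]$ and then replacing $\partial_t h$ by $F(h)-A(h)h$, a replacement valid only along solutions of (7.3); undoing it is precisely what is in question, so the appeal is circular. Concretely, if $\mathcal B v_*=0$ and $\tilde A_{\mathcal B}(v_*)[v_*]=\tilde F(v_*)$, then setting $\rho:=v_*+E\mathcal G(v_*)$ and inserting the definitions of $\tilde A$ and $\tilde F$ yields only the identity (the paper's (7.8))
\begin{equation}
A(\rho)\rho - F(\rho) = -E\bigl( \mathcal G'(\rho)\,[\,F(\rho)-A(\rho)\rho\,] \bigr),
\end{equation}
that is, $w = E\mathcal G'(\rho)[w]$ for $w := A(\rho)\rho-F(\rho)$, and not $w=0$ itself. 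The missing step --- the actual core of the paper's proof --- is that $\mathcal G'(0)=0$ (quadraticity of $\mathcal G$ at zero) together with continuity of $\mathcal G'$ forces $E\mathcal G'(\rho)$ to have small operator norm for $\rho$ small in $X_\gamma$, so that the perturbation of the identity $I - E\mathcal G'(\rho)$ is invertible and $w$ must vanish; combined with $\mathcal B\rho = \mathcal G(v_*)=\mathcal G(\rho)$ this places $\rho$ in the equilibrium set of (7.3). Your proposal never inverts this perturbation of the identity, so the backward transport of equilibria remains unproven. Relatedly, your closing diagnosis that the delicate point is the identification of the inverse $(I+E\mathcal G)^{-1}$ is a misdiagnosis: that part is the easy one, and the fixed-point identity (7.8) with the smallness of $E\mathcal G'(\rho)$ is where the content of the lemma lies.
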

\begin{proof}
By (7.7), the operator $[h \mapsto (I - E \mathcal G)h]$ is invertible as a mapping from $X_\gamma$ to $X_\gamma$. Furthermore, it maps zero to itself.

Pick some $v$ in the equilbrium set of (7.6). Then $\mathcal B v = 0$ and $\tilde A (v) v = \tilde F(v)$. Hence
\begin{align}
A (v + E\mathcal G (v) ) [ v+ E\mathcal G (v) ] = F(v + E\mathcal G (v) ) - E( \mathcal G' ( v + E\mathcal G (v) )  [ F(v) - A(v)v ] ).
\end{align}
Define $\rho := v + E\mathcal G v$. Then, since $E\mathcal G v$ vanishes on the boundary $\partial \Sigma$,
\begin{align} \label{34095649856gfdg45} \tag{7.8}
A (\rho ) \rho - F(\rho) =  - E( \mathcal G' ( \rho )  [ F(\rho) - A(\rho)\rho ] ).
\end{align}
Now $\rho$ is small whenever $v$ is small, hence $ I + E \mathcal G^\prime (\rho)$ is a bounded, linear, and invertible operator if $v$ is small in $X_\gamma$-norm. Hence (7.8) implies $\rho$ satisfies $A (\rho ) \rho = F(\rho)$. Since $\mathcal B v = 0$, we have $\mathcal B\rho = \mathcal G(\rho)$. This shows that $\rho$ is a stationary solution to (7.3). The proof is complete.
\end{proof}
The next theorem states conditions in which the set of equilibria $\mathcal E_h$ is a one-dimensional $C^1$-manifold in $X_1$, at least locally around zero. In such a case we can parametrize $\mathcal E_h$ over the kernel of the linearization, again at least locally around zero. 
We refer to \cite{garckeitokosaka} and \cite{MR2438774} for similiar considerations regarding surface diffusion flow.
\begin{theorem} \label{8495674596094564589645645089645}
Let $p \in (6,\infty)$, $q \in (19/10,2) \cap (2p/(p+1), 2)$, and $v_* = 0$. Let $X_c := X_\gamma \cap \{ u : u = const. \}$, 
\begin{equation}
{X}_\gamma^0 := X_\gamma \cap \{ u : \int_\Sigma u dx = 0 \},
\end{equation}
and
\begin{equation}
Z := Z_1 \times \R \times \R, \quad Z_1 := B^{2-1/q-3/p}_{qp}(\Sigma) \cap \{ u : \int_\Sigma u dx = 0 \}.
\end{equation}
Define the functional $I_* :  X_\gamma^0  \pfeil \R$ by
\begin{equation}
I_* (u ) := \int_\Sigma | \partial_x u |^2 dx - \omega_+ |u(L)|^2 - \omega_- |u(0)|^2,
\end{equation}
 and the nonlinear function $ F: X_c \times  X_\gamma^0 \pfeil Z$ by means of
\begin{equation}
F(m,u) := \left( \kappa(\Gamma) - \frac{1}{|\Gamma|} \int_\Gamma \kappa(\Gamma),\angle(\partial\Omega, \Gamma)_+ - \pi/2, \angle(\partial\Omega, \Gamma)_- - \pi/2 \right).
\end{equation}
Hereby, $\Gamma$ denotes the curve parametrized by $m + u \in X_\gamma$, $ \kappa(\Gamma)$ the curvature computed for that curve, and $\angle(\partial\Omega, \Gamma)_\pm$ denote the angles of $\Gamma$ with the outer boundary at the two contact points, whereby $+$ refers to the right contact point $x = L$.

Assume that $I_*$ is positive on $ X_\gamma^0$, that is, $I_* (u) \geq 0$ for all $u \in  X_\gamma^0$ and $I_* (u) = 0$, if and only if $ u = 0$.

Then $D_u F(0,0)$ is invertible, and there exists a small neighbourhood $U_c$ of zero in $X_c$, such that the set of of solutions to $F(m,u) = 0$ with $m \in U_c$ can be parametrized by a $C^1$-function $\mathsf u \in C^1(U_c ;  X_\gamma^0 )$, that is, $F(m, u) = 0$, $m \in U_c$, if and only if $u = \mathsf u(m)$.
\end{theorem}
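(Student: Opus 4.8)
The statement is an application of the implicit function theorem, so the plan is to verify its three hypotheses at the point $(m,u)=(0,0)$ — that $F$ is $C^1$, that $F(0,0)=0$, and that $D_uF(0,0)$ is a topological isomorphism from $X_\gamma^0$ onto $Z$ — and then to read off the parametrization $\mathsf u$. The first two points are essentially bookkeeping: $F(0,0)=0$ holds because $\Sigma_*=(0,L)$ is a stationary solution, so its curvature vanishes identically (hence so does its mean-value-free part) and it meets $\partial\Omega$ orthogonally at both endpoints; and the $C^1$-regularity of $F$ into $Z=Z_1\times\R\times\R$ follows from the smooth dependence of the transformed curvature operator and of the two contact-angle functionals on the height function, exactly as encoded in the decomposition $K(h)=P(h)h+Q(h)$ and the smoothness of $G,H$ recorded above (two derivatives of the $X_\gamma$-parametrization land the curvature component in $Z_1$). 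The whole weight of the proof therefore sits on the invertibility of $D_uF(0,0)$.

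First I would compute $D_uF(0,0)$ explicitly using the linearizations of Section 3. The derivative of the first component is the mean-value-free part of the linearized curvature, $(I-P_0)K'(0)u=(I-P_0)\partial_x^2u$, the projection $I-P_0$ arising precisely from differentiating the subtracted average $\tfrac1{|\Gamma|}\int_\Gamma\kappa$; the derivatives of the two angle components are the linearized $90^\circ$ conditions $\partial_xu(L)-\omega_+u(L)$ and $\partial_xu(0)+\omega_-u(0)$. Hence
\[
D_uF(0,0)u=\bigl((I-P_0)\partial_x^2u,\ \partial_xu(L)-\omega_+u(L),\ \partial_xu(0)+\omega_-u(0)\bigr),
\]
a one-dimensional second-order elliptic boundary value problem with Robin-type conditions, acting on the mean-value-free space $X_\gamma^0$.

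The central point is to link this operator to the functional $I_*$. Integration by parts identifies the symmetric bilinear form $I_*(u,v)=\int_0^L\partial_xu\,\partial_xv\,dx-\omega_+u(L)v(L)-\omega_-u(0)v(0)$ as the weak form associated with $D_uF(0,0)$, so that solving $D_uF(0,0)u=(f,a,b)$ with $f\in Z_1$ is equivalent to finding a mean-value-free $u\in H^1_2(0,L)$ with $I_*(u,v)=-\int_0^L fv+a\,v(L)-b\,v(0)$ for all mean-value-free $v$. The hard part — and the only place the hypothesis enters — is to upgrade the assumed positivity of $I_*$ to coercivity on mean-value-free functions. Since point evaluation is compact from $H^1_2(0,L)$ into $\R$ (through $H^1_2\into\into C^0([0,L])$), the boundary terms are a compact perturbation of the Poincaré--Wirtinger-controlled Dirichlet part, and a weak-compactness/contradiction argument converts ``$I_*(u)\ge0$ with equality only at $u=0$'' into a genuine estimate $I_*(u)\ge c\,|u|_{H^1_2}^2$: a minimizing sequence on the unit sphere would converge strongly in $C^0$ and weakly in $H^1_2$ to some mean-value-free limit annihilating $I_*$, hence to zero, which forces the infimum to equal $1$ rather than a nonpositive value. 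Lax--Milgram then gives unique weak solvability, and elliptic regularity for this Robin problem bootstraps the solution into the correct Besov scale, so that $D_uF(0,0)$ maps $X_\gamma^0=B^{4-1/q-3/p}_{qp}$ boundedly and bijectively onto $Z$. Injectivity is also immediate directly: any kernel element satisfies $\partial_x^2u=\mathrm{const}$ together with the homogeneous Robin conditions, whence testing yields $I_*(u)=0$ and positivity forces $u=0$, which combined with Fredholm index zero from the elliptic theory already gives bijectivity.

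With $D_uF(0,0)$ an isomorphism, the implicit function theorem applied at $(0,0)$ produces a neighbourhood $U_c$ of zero in $X_c$ and a unique map $\mathsf u\in C^1(U_c;X_\gamma^0)$ with $\mathsf u(0)=0$ and $F(m,\mathsf u(m))=0$, locally exhausting all solutions, as claimed. I expect the genuine obstacle to be the coercivity upgrade together with the accompanying regularity bootstrap into the Besov scale; by contrast, identifying the linearization and invoking the implicit function theorem are routine once the isomorphism property is in hand.
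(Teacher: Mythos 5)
Your overall strategy is sound and, at the decisive step, genuinely different from the paper's. Both proofs compute the same linearization $D_uF(0,0)v=\bigl((I-P_0)\partial_x\partial_x v,\;\cdot\;,\;\cdot\;\bigr)$ and finish with the implicit function theorem, but the paper proves invertibility operator-theoretically: it first removes the two scalar boundary data by solving auxiliary lifting problems, then considers $A_0 v:=\partial_x\partial_x v-P_0\partial_x\partial_x v$ with the homogeneous Robin conditions built into $D(A_0)\subset X_\gamma^0$, notes that $D(A_0)\hookrightarrow Z_1$ is compact so that the spectrum consists of isolated eigenvalues, and disposes of every eigenvalue through the identity $\lambda|u|_2^2=-I_*(u)$; since eigenfunctions lie in $D(A_0)\subset X_\gamma^0$, the positivity hypothesis applies to them verbatim and $0$ is in the resolvent set. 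Your route instead keeps the inhomogeneous boundary data inside the weak formulation and aims for a quantitative coercivity bound $I_*(u)\ge c\,|u|_{H^1_2}^2$ followed by Lax--Milgram and a regularity bootstrap; this is more work but yields a strictly stronger conclusion (a uniform lower bound, of independent use in energy arguments), whereas the paper's soft argument gives only invertibility and negativity of the eigenvalues. Incidentally, your signs for the Robin conditions, $\partial_x u(L)-\omega_+u(L)$ and $\partial_x u(0)+\omega_-u(0)$, are the ones consistent with Section 4 and with the identity $\lambda|u|_2^2=-I_*(u)$; the boundary operators displayed in the paper's proof have the signs at the two endpoints interchanged, evidently a typo.

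There is, however, one gap in your coercivity upgrade: the hypothesis that $I_*(u)=0$ only for $u=0$ is assumed on $X_\gamma^0=B^{4-1/q-3/p}_{qp}(\Sigma)\cap\{u:\int_\Sigma u\,dx=0\}$, while your contradiction argument applies it to a weak limit that a priori lies only in $H^1_{2,(0)}(0,L)$. The inequality $I_*\ge 0$ does extend to $H^1_{2,(0)}$ by density and $H^1$-continuity, but strict positivity does not pass to the closure by density alone; you need an extra step: a nonzero $u\in H^1_{2,(0)}$ with $I_*(u)=0$ is a global minimizer of $I_*$ on $H^1_{2,(0)}$, hence satisfies the Euler--Lagrange equation, i.e. $\partial_x\partial_x u=\mathrm{const}$ weakly together with the Robin conditions; thus $u$ is a quadratic polynomial, hence belongs to $X_\gamma^0$, and only then does the hypothesis force $u=0$. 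Alternatively, your closing remark (trivial kernel plus Fredholm index zero) avoids the issue entirely, since kernel elements of $D_uF(0,0)$ lie in $X_\gamma^0$ by definition --- but note that this alternative is essentially the paper's own argument rather than an independent shortcut. The remaining bookkeeping you defer (solutions of $\partial_x\partial_x u=f+\mathrm{const}$ with $f\in Z_1$ land in $B^{4-1/q-3/p}_{qp}$) is indeed immediate in one space dimension.
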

\begin{proof}
Note that $F(0,0) = 0$. As in Theorem 5.2 in \cite{MR2438774} we see that the derivative at zero is given by
\begin{equation}
D_u F (0,0) v = ( \partial_x \partial_x v - P_0 \partial_x \partial_x v, (\partial_x  + \omega_+)v(L), (\partial_x  - \omega_-)v(0) ), \quad v \in \mathring X_\gamma.
\end{equation}
Here, $P_0 u := \frac{1}{|\Sigma|} \int_\Sigma u dx$ denotes the mean value of $u \in L_2(\Sigma)$.
We now show that $D_u F(0,0) \in \mathcal B( X_\gamma^0 ; Z)$ is invertible. For given $g = (g_1,g_2,g_3) \in Z$ we first reduce to the case $ g=(g_1,0,0)$ by solving auxiliary problems first. Define the linear operator $ A_0 : D(A_0) \subset X_\gamma^0 \pfeil Z_1 $ by means of $A_0 v := \partial_x \partial_x v - P_0 \partial_x \partial_x v$, with domain
\begin{equation} \label{45678945698745697456} \tag{7.9}
D(A_0) :=  X_\gamma^0 \cap \{ v : (\partial_x  + \omega_+)v(L) =(\partial_x  - \omega_-)v(0) = 0 \}.
\end{equation}
It is now enough to show that $A_0 \in \mathcal B ( D(A_0 ) ; Z_1 )$ is invertible. Since the embedding $D(A_0) \into Z_1$ is compact, the resolvent of $A_0$ is compact and the spectrum of $A_0$ only consists of isolated eigenvalues. Consider some eigenvalue $\lambda \in \mathbb C$ with corresponding eigenfunction $u \in D(A_0)$. Testing the resolvent equation $\lambda u = A_0 u$ with $u$ in $L_2(\Sigma)$, we obtain
\begin{equation}
\lambda |u|_2^2 = \int_0^L (\partial_x \partial_x u )\bar u dx - \frac{1}{L} \int_0^L \partial_x \partial_x u dx \int_0^L \bar u dx = \int_0^L (\partial_x \partial_x u ) \bar u dx,
\end{equation}
since $u$ is mean value free. By an integration by parts invoking the boundary conditions of (7.9) we see
\begin{equation}
\lambda |u|_2^2 = - \int_0^L | \partial_x u|^2 dx + \omega_+ |u(L)|^2 + \omega_- |u(0)|^2 = - I_* (u).
\end{equation}
Since $I_*$ is positive on $ X_\gamma^0$, $\lambda$ is real and strictly less than zero, provided $u \not = 0$. Hence zero belongs to the resolvent set of $A_0$. The rest of the claim now follows from the implicit function theorem, cf. \cite{MR2438774}, \cite{MR816732}.
\end{proof}
\begin{figure}[ht]
\begin{tikzpicture}
\draw (5,0) arc (270:90:2cm);
\draw (-5,0) arc (-90:90:2cm);
\draw (-3,2) -- (3,2);
\draw (4,0.8) node {$\partial\Omega$};
\draw (0.1,1.75) node {$\Sigma_\ast$};
\draw (1.5,3.05) node {$\Gamma_{m + \mathsf u(m)}$};
\draw (-3.04,2.3) arc (112.5:67.82:8cm);
\end{tikzpicture}
\caption{Parametrization by $[m \mapsto m + \mathsf u(m)]$ in Theorem \ref{8495674596094564589645645089645}.}
\label{figucxyfghfghcvgg}
\end{figure}
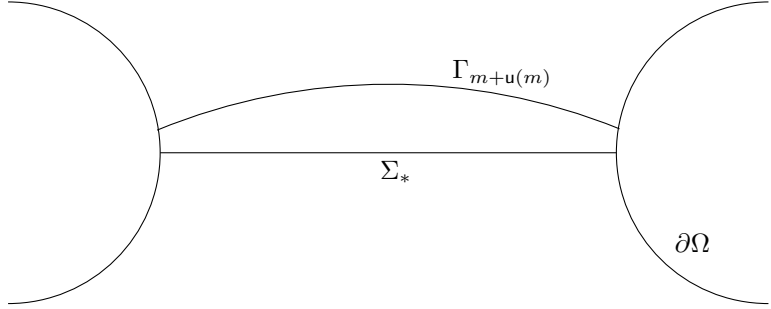
\begin{remark}
For situations in which $I_*$ is positive on $X_\gamma^0$,  we refer to the arguments of Section 4.1.
\end{remark}

We now apply the generalized principle of linearized stability \cite{pruessmulsek} to the nonlinear evolution problem (7.6).
\begin{theorem} \label{3498574958603476503475}
Let $p \in (6,\infty)$, $q \in (19/10,2) \cap (2p/(p+1), 2)$, $v_* = 0$, and $I_*$ positive on $X_\gamma^0$.
Let $\mathcal E_v$ be the set of equilibrium solutions to (7.6) and $U \subset X_\gamma$ a sufficiently small neighbourhood of zero in $X_\gamma$.
Then we have that $(\tilde A_{\mathcal B}, \tilde F) \in C^1 (U ; \mathcal B(X_1; X_0) \times X_0)$ and $\tilde A_{\mathcal B}(v_*)$ has maximal $L_p$-regularity with respect to $X_0$. Set $A_0 := \tilde A_{\mathcal B} (0)$, the linearization at $v_*$. Then $v_* = 0$ is normally stable, that is,
\begin{enumerate}
\item near $v_* = 0$, the set of equilibria $\mathcal E_v$ is a $C^1$-manifold in $X_1$ of dimension $1$,
\item the tangent space of $\mathcal E_v$ at $v_* = 0$ is isomorphic to the kernel of $A_0$, 
\item zero is a semi-simple eigenvalue of $A_0$, 
\item the spectrum $\sigma (A_0)$ satisfies $\sigma(A_0) \backslash \{ 0 \} \subset \mathbb C_+ := \{ \operatorname{Re} > 0 \} $.
\end{enumerate}
In particular, $v_* = 0$ is stable in $X_\gamma$ and there exists some $\delta > 0$, such that the unique solution
$v$ to the initial value $v_0 \in X_\gamma$ with $|v_0|_{X_\gamma} \leq \delta$ exists on the half line,
\begin{equation} \label{3498534958dsf634075} \tag{7.10}
v \in W^1_p(\R_+ ; X_0) \cap L_p(\R_+ ; D( \tilde A_{\mathcal B} ) ),
\end{equation}
 and $v(t)$ converges to some $v_\infty \in \mathcal E_v$ in $X_\gamma$ at an exponential rate as $t \pfeil \infty$.
\end{theorem}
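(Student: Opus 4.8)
The plan is to verify the structural hypotheses of the generalized principle of linearized stability of Pr\"uss, Simonett and Zacher \cite{pruessmulsek} for the reduced quasilinear equation (7.6), and then to read off the four assertions directly from their theorem. First I would record that $\tilde A_{\mathcal B}$ and $\tilde F$ are assembled from $A$, $F$, $\mathcal G$ and the bounded extension operator $E$ by composition, multiplication and inversion of $I+E\mathcal G'$; since $A,F\in C^1$ by (7.4) and $\mathcal G$ is smooth, this yields $(\tilde A_{\mathcal B},\tilde F)\in C^1(U;\mathcal B(X_1;X_0)\times X_0)$. Because $\mathcal G$, $F$ and $Q$ are all quadratic at zero, one has $\tilde F(0)=0$ and $D\tilde F(0)=0$, so that $A_0:=\tilde A_{\mathcal B}(0)$ is the genuine full linearization of the right-hand side of (7.6) at $v_*=0$. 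The operator $A_0$ is of the same type as the linearized operator treated in \cite{mulsekpaper12}, the boundary terms entering only as lower-order perturbations, so maximal $L_p$-regularity of $A_0$ follows from Theorem 4.8 there.

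Next I would describe the equilibrium set. By Lemma \ref{8943507634537456893} the substitution $v=(I-E\mathcal G)h$ is a local isomorphism between the equilibrium sets $\mathcal E_h$ and $\mathcal E_v$ near zero, so it suffices to understand $\mathcal E_h$. A stationary interface of (7.1) has $\ljump n_\Sigma\cdot\nabla\eta\rjump=0$ together with the ninety degree angle condition and $\eta=K(h)$ harmonic with Neumann data, which forces $\eta$ constant, hence constant curvature with perpendicular contact; these are precisely the zeros of the map $F$ of Theorem \ref{8495674596094564589645645089645}. Under the hypothesis that $I_*$ is positive on $X_\gamma^0$, that theorem provides invertibility of $D_uF(0,0)$ and a one-dimensional $C^1$-parametrization $m\mapsto m+\mathsf u(m)$ of the zero set over the constants $X_c$. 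Transporting this back through Lemma \ref{8943507634537456893} gives assertion (1). Differentiating the equilibrium identity $\tilde A_{\mathcal B}(v)v=\tilde F(v)$ along $\mathcal E_v$ at $v_*=0$ and using $D\tilde F(0)=0$ shows that the tangent space is contained in $N(A_0)$; conversely, invertibility of $D_uF(0,0)$ on $X_\gamma^0$ excludes any mean value free element of $N(A_0)$, while the range $R(A_0)$ consists of mean value free functions. Hence $N(A_0)$ is one-dimensional and coincides with the tangent space, which is assertion (2).

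For the spectrum I would argue as in the proof of Theorem \ref{jkh34jkh5g34h5345}. The compact embedding $D(A_0)\into\into X_0$ makes the resolvent compact, so $\sigma(A_0)$ is a discrete set of eigenvalues of finite multiplicity. Testing the eigenvalue problem $\lambda\rho=-A_0\rho$ with the associated transformed curvature and integrating by parts produces, for any eigenfunction to a nonzero eigenvalue (which is mean value free after integrating the first equation), the identity
\begin{equation}
\lambda\,I_*(\rho)+\int_\Omega|\nabla\mu|^2\,dx=0 ,
\end{equation}
where $\mu$ is the associated harmonic potential; positivity of $I_*$ on $X_\gamma^0$ then forces $\lambda$ real and strictly negative for $\lambda\in\sigma(-A_0)\setminus\{0\}$, that is $\sigma(A_0)\setminus\{0\}\subset\{\operatorname{Re}>0\}$, which is assertion (4). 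Semi-simplicity of the eigenvalue zero, assertion (3), follows from $N(A_0)=N(A_0^2)$: for $h\in N(A_0^2)$ the element $A_0h\in N(A_0)\cap R(A_0)$ is mean value free and lies in the one-dimensional kernel whose generator has nonzero mean, hence $A_0h=0$. With (1)--(4) and maximal regularity in hand, the generalized principle of linearized stability \cite{pruessmulsek} yields stability in $X_\gamma$, global existence in the class (7.10) for small $|v_0|_{X_\gamma}$, and exponential convergence of $v(t)$ to some $v_\infty\in\mathcal E_v$.

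The main obstacle is the precise matching of the geometric and spectral descriptions at $v_*=0$, all of which rest on the single hypothesis that $I_*$ is positive on mean value free functions. Concretely, one must keep the nonlinear change of variables $v=(I-E\mathcal G)h$, the length-functional second variation $I_*$ underlying Theorem \ref{8495674596094564589645645089645}, and the energy identity for $A_0$ mutually consistent, so that the one-dimensional equilibrium manifold, the kernel of $A_0$, and the algebraic multiplicity of the eigenvalue zero all agree; once the sign conventions and the mean-value decomposition are aligned, the remaining steps are routine adaptations of the flat-case arguments of Section 4.
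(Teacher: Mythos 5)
Your proposal is correct and follows essentially the same route as the paper: $C^1$-regularity and maximal regularity from (7.4) and the Section 4 perturbation argument, the equilibrium manifold via Theorem \ref{8495674596094564589645645089645} transported through Lemma \ref{8943507634537456893}, the spectral assertions via the energy identity and the $N(A_0)=N(A_0^2)$ argument already used in Section 4, and finally the Pr\"uss--Simonett--Zacher principle. The only difference is cosmetic: you spell out details (e.g.\ $D\tilde F(0)=0$ and the mean-value argument for semi-simplicity) that the paper handles by citing its earlier sections.
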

\begin{remark}
Note that the solution $v$ in (7.10) naturally satisfies $\mathcal B v(t) = 0$ for all $t \in \R_+$.
\end{remark}
\begin{proof}
Let $U \subset X_\gamma$ be a sufficiently small neighbourhood of zero in $X_\gamma$. Then by (7.4), $(\tilde A_{\mathcal B}, \tilde F) \in C^1 (U ; \mathcal B(X_1; X_0) \times X_0)$. We have already shown (3) and (4) and that $A_0$ has maximal regularity. Let us show (1) and (2). 


Note that Theorem 7.2 gives a characterization of the equilibrium set $\mathcal E_h$ around zero. Indeed, the function $\mathsf u$ of Theorem 7.2 induces a $C^1$-parametrization of $\mathcal E_h$, locally around zero, over a one-dimensional parameter family. Moreover, $\mathsf u (0) = 0$.

Note that we have already shown that the kernel of $A_0$ consists of a one-parameter family of parabolas, whence we can, at least locally around zero, parametrize $\mathcal E_v$ over the kernel of $A_0$. 

The rest of the claim follows from \cite{pruessmulsek}.
\end{proof}
We then obtain a result for the solution $h$ of (7.3) in a natural way.
\begin{theorem}
The trivial equilibrium $h_* = 0$ of (7.3) is stable in $X_\gamma$, and there exists some $\delta_1 > 0$, such that the unique solution
$h$ to the initial value $h_0 \in X_\gamma$ with $|h_0|_{X_\gamma} \leq \delta_1$ exists on the half line $\R_+$,
 and $h(t)$ converges to some $h_\infty \in \mathcal E_h$ in $X_\gamma$ at an exponential rate as $t \pfeil \infty$.
\end{theorem}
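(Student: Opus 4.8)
The plan is to transfer the nonlinear stability statement already established for the reduced variable $v$, namely Theorem~\ref{3498574958603476503475}, back to the original height function $h$ by means of the change of variables $[h \mapsto v]$ analysed in Lemma~\ref{8943507634537456893}. Recall from (7.7) that $v = (I - E\mathcal{G})(h)$ with inverse $h = (I + E\mathcal{G})(v)$, that this map is a bijection of a small neighbourhood of zero in $X_\gamma$ onto another such neighbourhood, that it fixes zero, and that it identifies the equilibrium set $\mathcal{E}_v$ of (7.6) with the equilibrium set $\mathcal{E}_h$ of (7.3). Hence the whole argument is a translation, and no new spectral or regularity input beyond Theorem~\ref{3498574958603476503475} should be required.

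First I would fix the radius $\delta$ furnished by Theorem~\ref{3498574958603476503475}. Using the continuity of $[h \mapsto v] = I - E\mathcal{G}$ at zero together with $\mathcal{G}(0) = 0$, I would choose $\delta_1 > 0$ so small that $|h_0|_{X_\gamma} \leq \delta_1$ forces $|v_0|_{X_\gamma} \leq \delta$ for $v_0 := (I - E\mathcal{G})(h_0)$. Theorem~\ref{3498574958603476503475} then produces a unique global solution $v \in W^1_p(\R_+; X_0) \cap L_p(\R_+; D(\tilde A_{\mathcal{B}}))$ of (7.6) with $v(0) = v_0$, satisfying $\mathcal{B} v(t) = 0$ and converging in $X_\gamma$ at an exponential rate to some $v_\infty \in \mathcal{E}_v$.

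Next I would define $h(t) := (I + E\mathcal{G})(v(t))$ and check that $h$ solves (7.3) with $h(0) = h_0$; this is exactly the reduction (7.3)$\Leftrightarrow$(7.6) read backwards, with $\tilde h(t) = E\mathcal{G}(v(t))$ vanishing on $\partial\Sigma$, so that $\mathcal{B} h(t) = \mathcal{G}(h(t))$ holds and $h(0) = (I+E\mathcal{G})(v_0) = h_0$ by (7.7). Uniqueness of $h$ is inherited from uniqueness of $v$ and the bijectivity of the change of variables. For the convergence, since $\mathcal{G} \in C^\infty(X_\gamma)$ and $E$ is bounded, the map $I + E\mathcal{G}$ is locally Lipschitz near zero, so with $h_\infty := (I + E\mathcal{G})(v_\infty)$ I obtain
\begin{equation}
|h(t) - h_\infty|_{X_\gamma} \leq C\, |v(t) - v_\infty|_{X_\gamma},
\end{equation}
whence the exponential rate passes to $h$; Lemma~\ref{8943507634537456893} guarantees $h_\infty \in \mathcal{E}_h$. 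Stability of $h_* = 0$ in $X_\gamma$ follows in the same manner from stability of $v_* = 0$ and the local bi-Lipschitz character of $[h \mapsto v]$.

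The main obstacle is not a deep estimate but the careful bookkeeping of this transformation at the level of both regularity and rates: one must verify that $h = (I + E\mathcal{G})(v)$ genuinely inherits the solution regularity from $v$, and that the Lipschitz bounds for $I \pm E\mathcal{G}$ hold uniformly on one fixed small ball in $X_\gamma$, so that the stability estimate and the exponential decay are truly preserved and not merely qualitatively transported. All of this is controlled by the smoothness of $\mathcal{G}$ and the boundedness of $E$ established earlier, so the result reduces cleanly to Theorem~\ref{3498574958603476503475} and Lemma~\ref{8943507634537456893}.
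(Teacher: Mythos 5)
Your proposal is correct and follows exactly the paper's route: the paper proves this theorem in one line as ``a direct combination of Theorem \ref{3498574958603476503475} together with Lemma \ref{8943507634537456893},'' i.e.\ transferring the nonlinear stability of $v_*=0$ for (7.6) back to $h$ through the change of variables $h \mapsto v = (I-E\mathcal G)(h)$ and its equilibrium-preserving inverse. Your write-up simply makes explicit the bookkeeping (choice of $\delta_1$, local Lipschitz bounds for $I \pm E\mathcal G$, correspondence $h_\infty = (I+E\mathcal G)(v_\infty) \in \mathcal E_h$) that the paper leaves implicit.
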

\begin{proof}
This is a direct combination of Theorem 7.4 together with Lemma 7.1.
\end{proof}

\section*{Acknowledgements} The work of H.G. is supported by the DFG project RTG 2339. The work of M.R. is financially supported by the DFG project RTG 1692. The support is gratefully acknowledged.
\bibliographystyle{plain}
\bibliography{bibo}
\end{document}